\documentclass[letter,11pt,leqno,english]{amsart}
\RequirePackage{doi}
\usepackage[T1]{fontenc}
\usepackage{tgbonum}

\usepackage{pdfsync,hyperref,calc,enumerate,amssymb,amscd,color}
\usepackage[all]{xy}
\hyphenation{equi-variant}

\usepackage{color,soul}


\newcommand{\version}[1] 
{
\begin{center}
Last edited on #1\\
Last compiled on \today\\
file name: \jobname
\end{center}
}

\newcounter{commentcounter}


\theoremstyle{plain}
\newtheorem{theorem}{Theorem}[section]

\newtheorem{lemma}[theorem]{Lemma}

\newtheorem{corollary}[theorem]{Corollary}
\newtheorem{proposition}[theorem]{Proposition}

\newtheorem*{theorem*}{Theorem}
\newtheorem*{mtheorem*}{Main Theorem}

\theoremstyle{definition}
\newtheorem{definition}[theorem]{Definition}

\theoremstyle{remark}
\newtheorem*{summary*}{Summary}

\makeatletter\let\c@equation=\c@theorem\makeatother


\DeclareMathAlphabet{\matheurm}{U}{eur}{m}{n}

\DeclareMathOperator{\Coker}{Coker}

\DeclareMathOperator{\Gr}{Gr}

\DeclareMathOperator{\Image}{Im}

\DeclareMathOperator{\Ker}{Ker}

\DeclareMathOperator{\Map}{Map}

\DeclareMathOperator{\Tor}{Tor}

\DeclareMathOperator{\Hom}{\textup{Hom}}

\DeclareMathOperator{\Irr}{\textup{Irr}}

\newcommand{\IC}{\mathbb{C}}
\newcommand{\IQ}{\mathbb{Q}}

\newcommand{\IZ}{\mathbb{Z}}
\newcommand{\IP}{\mathbb{P}}
\newcommand{\cala}{\mathcal{A}}
\newcommand{\calf}{\mathcal{F}}
\newcommand{\calp}{\mathcal{P}}


\makeatletter
\newcommand{\colim@}[2]{%
  \vtop{\m@th\iequation{##\cr
    \hfil$#1\operator@font colim$\hfil\cr
    \noequation{\nointerlineskip\kern1.5\ex@}#2\cr
    \noequation{\nointerlineskip\kern-\ex@}\cr}}%
}
\newcommand{\colim}{%
  \mathop{\mathpalette\colim@{\rightarrowfill@\textstyle}}\nmlimits@
}
\makeatother



\title[Equivariant bordism of surfaces]{Oriented and unitary equivariant bordism of surfaces}

\author{Andr\'es Angel}
	\address{Departmento de Matematicas\\
		Universidad de los Andes\\ 
		Carrera 1 N. 18A -12, Bogot\'a, Colombia}
	\email{ja.angel908@uniandes.edu.co}

\author{Eric Samperton}
	\address{Department of Mathematics\\
		University of Illinois at Urbana-Champaign\\ 
		1409 W. Green Street (MC-382), Urbana, IL 61801, USA}
	\email{smprtn@illinois.edu}
	\urladdr{https://smprtn.pages.math.illinois.edu}

\author{Carlos Segovia}
	\address{Instituto de Matem\'aticas\\
		UNAM Unidad Oaxaca\\
		Antonio de Le\'on \#2, altos, Col. Centro\\
		Oaxaca de Ju\'arez, CP. 68000 \\
		M\'exico}
	\email{csegovia@matem.unam.mx}
\author{Bernardo Uribe}
	\address{Departamento de Matem\'aticas y Estad\'istica\\
		Universidad del Norte\\ 
		Km. 5 via Puerto Colombia, Barranquilla, Colombia}
	\address{Max Planck Institut f\"ur Mathematik\\
		Vivatsgasse 7\\ 
		Bonn 53115, Germany}
	\email{bjongbloed@uninorte.edu.co, uribe@mpim-bonn.mpg.de}
	\urladdr{https://sites.google.com/site/bernardouribejongbloed/}

\keywords{Equivariant bordism, equivariant vector bundle, surface}       
\subjclass[2020]{57R85, 55N22, 57R75, 57R77}

\thanks{The first author was partially supported by the grant (\#INV-2019-84-1860) from the Fondo de Investigaciones de la Facultad de Ciencias de la Universidad de los Andes. The second author is supported by NSF grant DMS \#2038020. The third author is supported by c\'atedras CONACYT, Convocatoria PAEP-2018 and Proyecto CONACYT ciencias b\'asicas 2016, No. 284621. The fourth author acknowledges and thanks the continuous support of the Alexander Von Humboldt Foundation and of CONACYT through project 
CB-2017-2018-A1-S-30345-F-3125. We are indebted to Prof. Peter Landweber for reading
 earlier versions of this work and for suggesting changes which have improved the paper. Thank you Prof. Landweber. }


\begin{document}

\begin{abstract}  
Fix a finite group $G$.  We study $\Omega^{SO,G}_2$ and $\Omega^{U,G}_2$, the unitary and oriented bordism groups of smooth $G$-equivariant compact surfaces, respectively, and
we calculate them explicitly. Their ranks are determined by the possible representations around fixed points, while their torsion subgroups are isomorphic to the direct sum of the Bogomolov multipliers of the Weyl groups of representatives of conjugacy classes of all subgroups of $G$.
We present an alternative proof of the fact that surfaces with free actions  which induce non-trivial
 elements in the Bogomolov multiplier of the group cannot equivariantly bound.  This result permits us to show that the 2-dimensional SK-groups
 (Schneiden und Kleben, or ``cut and paste") of the classifying spaces of a finite group can be understood in terms of  the bordism group of free
 equivariant surfaces modulo the ones that bound arbitrary actions.
\end{abstract}

\maketitle

\section{Introduction}
Equivariant bordism groups have been a subject of ongoing research since the 1960s.  Conner, Floyd, Landweber, Stong, Smith and tom Dieck, among others, laid the foundations for the extraordinary homology and cohomology theories obtained from equivariant bordism, and found many interesting properties of these groups.  Given a finite group $G$, a particularly important problem is the explicit calculation of the oriented and complex $G$-equivariant bordism groups of a point, since they provide the coefficients for the theories.  This turns out to be a complicated task.

Explicit calculations of the equivariant bordism groups for finite abelian groups \cite{Stong-complex, Landweber-cyclic,  Ossa} led some to expect that, at least in the unitary case, equivariant
bordism groups are always a free module over the unitary bordism ring for any finite group $G$ (see \cite[Page 1]{Rowlett-metacyclic}, \cite[Ch. XXVIII.5]{May-book}, \cite[Conj. 1.2]{GreenleesMay}). This belief was confirmed for general abelian groups \cite{Loffler} \cite[Ch. XXVIII, Thm. 5.1]{May-book} and for metacyclic groups \cite{Rowlett-metacyclic}, and therefore it was conjectured that for any finite group this was the case. This conjecture remained dormant for some years and it was recalled by the fourth author in his 2018 ICM Lecture \cite{Uribe-evenness} where he coined it "The evenness conjecture in equivariant unitary bordism". 

When the evenness conjecture holds true for a group $G$, it implies that the $G$-equivariant unitary bordism ring is torsion-free.
In particular, any unitary manifold with a free action of a finite group that generates a torsion
class in the unitary bordism group of free actions would bound equivariantly.
This has always been the first step for proving the evenness conjecture, namely, to construct explicit equivariant manifolds whose boundaries are the desired generators of the equivariant unitary bordism groups of free actions.

In the case of surfaces, the evenness conjecture would imply that all oriented surfaces with orientation preserving free actions bound equivariantly (note that if an oriented surface with orientation preserving free action does not  bound equivariantly,
then the class of the difference of this surface with $G$-times the quotient surface induces a non-trivial torsion class in the reduced $G$-equivariant unitary bordism group).
The third author in joint work with Dominguez \cite{Segovia} showed that indeed this was the case for abelian, dihedral, symmetric and alternating groups. Nevertheless, it fails to be true in general. It has been
recently shown by the second author that there is an obstruction class for an oriented surface with an orientation preserving free action to bound equivariantly \cite{Samperton-schur, Samperton-free}, and this obstruction class
lies in the Bogomolov multiplier of the group \cite{Bogomolov, Kunyavskii}
(the Bogomolov multiplier of a finite group consists of the classes of the Schur multiplier $H^2(G,\IC^*)$
that vanish once restricted to any abelian subgroup; the homological version of the Bogomolov multiplier
is the quotient of the second integral homology of the group by the classes generated by two dimensional tori \cite{Moravec}).
 This result implies that indeed there are torsion classes in the
equivariant unitary bordism groups and therefore that the evenness conjecture in equivariant unitary bordism is false in general. The evenness conjecture might then be
restated instead as a classification question, namely: {\it Which finite groups satisfy the evenness conjecture in equivariant unitary bordism?}

In this work we focus on the calculation of the oriented and the unitary $G$-equivariant bordism groups for compact surfaces. We use the 
fixed point construction methods developed in \cite{Rowlett} to determine the rank of the equivariant bordism groups, and
then, we use the explicit generators of the equivariant bordism groups for adjacent families in dimension 3 in order to determine
which equivariant surfaces bound. In Theorem \ref{theorem free actions that bound} we present a generalization to all finite groups of the result
shown by the second author in \cite{Samperton-schur}, which states that  the obstruction class for equivariantly
bounding an oriented surface with free action is the element in the Bogomolov multiplier of the group that the surface defines.
The Conner-Floyd spectral sequence will then allow us to determine the torsion
group in the equivariant bordism group of surfaces.
Our main result is:
\begin{theorem*} {\bf \ref{theorem torsion subgroup bordism}.}
Let $G$ be a finite group and  $\Tor_\IZ (\Omega^{G}_2)$ the torsion subgroup of the unitary
or oriented $G$-equivariant bordism of surfaces $\Omega^{G}_2$. Then there is a canonical isomorphism
\begin{equation*}
  \bigoplus_{(K)}  \tilde{B}_0(W_K) \cong  \Tor_\IZ (\Omega^{G}_2)
\end{equation*}
where $(K)$ runs over all conjugacy classes of subgroups of $G$, $W_K=N_GK/K$ and $\tilde{B}_0(W_K)$ is 
the homology version of the Bogomolov multiplier of the group $W_K$.
\end{theorem*}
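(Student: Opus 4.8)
The plan is to combine three inputs: the determination of the rank of $\Omega^{G}_2$ in terms of the representations at isolated fixed points (already obtained above following Rowlett's fixed-point methods), the Conner--Floyd spectral sequence associated to the filtration of $\Omega^{G}_2$ by families of isotropy subgroups, and Theorem~\ref{theorem free actions that bound} applied not to $G$ itself but to each Weyl group $W_K$. Linearly order the conjugacy classes of subgroups of $G$ compatibly with subconjugacy, smaller groups first, obtaining a chain of families $\{1\}=\mathcal F_0\subset\mathcal F_1\subset\cdots\subset\mathcal F_m=\mathcal{ALL}$ with $\mathcal F_t=\mathcal F_{t-1}\cup(K_t)$ and $K_t$ maximal in $\mathcal F_t$. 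Writing $F^t\Omega^{G}_2$ for the image of the bordism of $G$-surfaces with isotropy in $\mathcal F_t$, this is the Conner--Floyd filtration of $\Omega^{G}_2$, and the goal is to show that after running the differentials the surviving torsion is $\bigoplus_{t}\tilde B_0(W_{K_t})$ and that the resulting extension problem splits.

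First I would identify the relative terms $\Omega^{G}_2(\mathcal F_t,\mathcal F_{t-1})$. By the tubular neighborhood theorem a class here is carried by the $K_t$-fixed locus together with its $G$-normal data, and since $K_t$ is maximal in $\mathcal F_t$ the normal $K_t$-representation has no trivial summand; in the orientation-preserving (or stably complex) setting this forces the fixed locus to be a union of two-dimensional components on which $K_t$ acts trivially, together with isolated orbits carrying a faithful two-dimensional rotation (resp.\ complex line) normal representation --- there is no one-dimensional fixed stratum, since a reflection would reverse orientation and a complex normal line has even real rank. Hence $\Omega^{G}_2(\mathcal F_t,\mathcal F_{t-1})$ splits as $\Omega^{SO}_2(BW_{K_t})\oplus(\text{free abelian})$: the first summand, coming from the free $W_{K_t}$-surfaces, equals $H_2(W_{K_t};\IZ)$ by the Atiyah--Hirzebruch spectral sequence together with $\Omega^{SO}_0=\IZ,\ \Omega^{SO}_1=\Omega^{SO}_2=0$, and is pure torsion; the second summand is generated by the isolated-fixed-point data and, by the rank computation above, accounts precisely for the free part of $\Omega^{G}_2$, so it contributes nothing to the torsion. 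Thus all torsion of $\Omega^{G}_2$ is carried by the classes of free $W_{K_t}$-surfaces.

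Next I would replace $H_2(W_K;\IZ)$ by its quotient $\tilde B_0(W_K)$ using Theorem~\ref{theorem free actions that bound} for $W_K$, in two directions. If a free $W_K$-surface $\Sigma$ has trivial class in $\tilde B_0(W_K)$, the theorem produces a $W_K$-$3$-manifold $N$ with $\partial N=\Sigma$; regarding $N$ as an $N_GK$-manifold with $K$ acting trivially and inducing up, $G\times_{N_GK}N$ is a $G$-$3$-manifold bounding $G\times_{N_GK}\Sigma$. This shows simultaneously that the image of the degree-$2$ stratum of type $(K)$ in $\Omega^{G}_2$ is a quotient of $\tilde B_0(W_K)$ and that there is a well-defined homomorphism $s_{(K)}\colon\tilde B_0(W_K)\to\Omega^{G}_2$, $[\Sigma]\mapsto[G\times_{N_GK}\Sigma]$, whose image lies in $\Tor_\IZ(\Omega^{G}_2)$. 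Conversely, if a $G$-surface whose $K$-fixed locus is a free $W_K$-surface $\Sigma$ bounds a $G$-$3$-manifold $W$, then the union of the three-dimensional components of $W^K$ is an oriented $W_K$-$3$-manifold with boundary $\Sigma$, so Theorem~\ref{theorem free actions that bound} forces $[\Sigma]=0$ in $\tilde B_0(W_K)$. Hence the torsion of the $(K)$-graded piece of the Conner--Floyd filtration of $\Omega^{G}_2$ is exactly $\tilde B_0(W_K)$, and $s_{(K)}$ splits the surjection from $F^t\Omega^{G}_2$ onto it.

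Finally, since each $s_{(K_t)}$ takes values in $F^t\Omega^{G}_2$ and projects isomorphically onto $\tilde B_0(W_{K_t})$, and $\Omega^{G}_2$ is finitely generated, an induction over $t$ splits off the finitely generated free part and each torsion group $\tilde B_0(W_{K_t})$, producing the canonical isomorphism $\bigoplus_{(K)}\tilde B_0(W_K)\xrightarrow{\ \sum s_{(K)}\ }\Tor_\IZ(\Omega^{G}_2)$; naturality of the maps $s_{(K)}$ and of the fixed-point restrictions gives its canonicity, and the identical argument applies to $\Omega^{U,G}_2$. The step I expect to be the main obstacle is the precise control of the Conner--Floyd differentials: one must prove that the boundary maps out of the degree-$3$ relative terms hit exactly the subgroup of $H_2(W_{K_t};\IZ)$ generated by the images of two-dimensional tori and nothing more, which is where the explicit equivariant $3$-manifolds for adjacent families and the ``only if'' half of Theorem~\ref{theorem free actions that bound} are essential; a secondary point is the orientability bookkeeping for the normal bundles of the isolated-fixed-point strata in the oriented case.
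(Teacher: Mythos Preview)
Your proposal is correct and follows essentially the same approach as the paper: the Conner--Floyd filtration by families, identification of the relative terms (the paper's Lemma~\ref{l:adjacentnofixed}), reduction to $W_K$ and application of Theorem~\ref{theorem free actions that bound} to obtain $\tilde B_0(W_K)$ as the torsion of each graded piece (the paper's Lemma~\ref{l:gr}), and the canonical splitting maps $s_{(K)}\colon [\Sigma]\mapsto[G\times_{N_GK}\Sigma]$. Your fixed-point argument for the converse direction --- passing from a bounding $G$-3-manifold $W$ to the $W_K$-3-manifold given by the three-dimensional components of $W^K$ --- is a more explicit realization of the reduction the paper sketches just before Lemma~\ref{l:gr}; note only that for the graded statement you should allow $\partial W$ to contain an additional component $M'$ with isotropy in $\mathcal F_{t-1}$, which is harmless since $(M')^K=\emptyset$.
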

With the torsion group in hand, we describe explicitly in Theorem \ref{theorem decomposition equivariant bordism of surfaces} the $G$-equivariant bordism groups of surfaces,
unitary and oriented.


Since there are infinitely many groups with non-trivial Bogomolov multipliers, we conclude that there are infinitely many groups which do not satisfy the evenness conjecture in equivariant unitary bordism. On the other hand, there are also infinitely many groups $G$ whose $G$-equivariant unitary bordism group of surfaces is a free abelian group, thus implying that these groups may still satisfy the evenness conjecture for equivariant unitary bordism.

We use our previous calculations to interpret  which equivariant surfaces bound in terms of the $SK$ relation (cutting and pasting 
from the German {\it Schneiden und Kleben}).  
The study of invariants under cutting and pasting started with the characterization by J\"anich \cite{Jaenich1,Jaenich2}
 of invariants with the additive properties of the Euler characteristic and the signature, and it was  further developed with the
 introduction of the $SK$-groups of a space in the book \cite{KKNO}. The $SK$-groups of a space can be understood as the
 groups of equivalence classes of manifolds with continuous maps to the space subject to the equivalence relation given by
 {\it cutting and pasting}.  The 2-dimensional $SK$-groups of $BG$ can be understood in terms of cutting and pasting
 surfaces with free $G$-actions. The $SK$-groups of $BG$ were studied in \cite{KKNO} and were identified
 in \cite[Thm. 2]{Neumann} with the second integral homology group of $BG$ modulo the toral classes (as far as we know 
this is the first reference where the homological Bogomolov multiplier appears).

We finish this work with the study of two explicit groups of order 64 and 243 respectively whose Bogomolov multipliers are non-trivial. 
We sketch why both groups possess non-trivial Bogomolov multipliers and we give explicit
homomorphisms from the fundamental group of a genus 2 surface to both groups that define the desired surfaces with free actions
that do not bound equivariantly. These constructions allow us to give explicit generators for the torsion subgroup
of the equivariant unitary bordism groups for both groups.

\section{Preliminaries}
\label{s:prelim}

\subsection{Equivariant bordism}
\label{ss:bordism}
Let $G$ be a finite group and consider compact manifolds endowed with smooth actions of the group $G$ preserving
 either the orientation or the unitary (tangentially stable almost complex) structure.
 
 Recall that a 
 tangentially stable almost complex $G$-structure over the $G$-manifold $M$ consists of a $G$-equivariant complex vector bundle $\xi$ over $M$,
 such that $TM \oplus \mathbb{R}^k \cong \xi$ as $G$-equivariant real vector bundles and $k$ is some natural number; here $G$ acts trivially on the stabilized part $\mathbb{R}^k$.
 Two tangentially stable almost complex structures are identified if they become isomorphic as complex vector bundles after stabilization with further
 $G$-trivial $\mathbb{C}$ summands. 
 
 With this definition at hand, if $K$ is a subgroup of $G$, then the fixed points set $M^K$ is endowed with a  canonical tangential stable almost complex 
 $W_K$-structure with $W_K:=N_GK/K$. This follows from the isomorphism of $W_K$-equivariant real bundles 
 \begin{equation}
 \xi^K \cong (TM \oplus \mathbb{R}^k)^K \cong (TM|_{M^K})^K \oplus \mathbb{R}^k = T(M^K) \oplus \mathbb{R}^k
 \end{equation}
 and the fact that $\xi^K$ becomes a $W_K$-equivariant complex vector bundle over $M^K$.
 
 Now, as $N_GK$-equivariant real vector bundles we have the isomorphism
 \begin{equation} \label{complex structure normal bundle}
 \xi|_{M^K} \cong TM|_{M^K} \oplus \mathbb{R}^k \cong T(M^K) \oplus \nu(M^K,M) \oplus \mathbb{R}^k \cong \xi^K \oplus \nu(M^K,M) ,
 \end{equation}
where $\nu(M^K,M)$ denotes the normal bundle of the embedding $M^K \hookrightarrow M$.
Since both $ \xi|_{M^K}$ and $\xi^K$ are $N_GK$-equivariant complex vector bundles over $M^K$,
then the normal bundle   $\nu(M^K,M)$ is naturally endowed with the structure of a $N_GK$-equivariant
complex vector bundle. The fact that the normal bundles of the fixed points $M^K$ are endowed with complex structures plays an important role in the study of tangentially stable almost complex $G$-structures.

Tangentially stable almost complex $G$-structures are also called $G$-equivariant unitary structures, and
the equivalence classes of manifolds under the bordism relation in the realm of $G$-equivariant unitary structures is called the $G$-equivariant unitary bordism group.

Following the notation of Stong \cite{Stong-complex}, denote by $\Omega^G_*$ either the bordism ring $\Omega^{SO,G}_*$
of $G$-equivariant oriented manifolds or the bordism ring $\Omega^{U,G}_*$ of $G$-equivariant unitary 
(tangentially stable almost complex) manifolds. 
Whenever the upper script $SO$ or $U$ is not specified, it means that the construction and results apply to both homology theories.

The explicit definitions of both unitary and oriented equivariant bordism rings can be found in 
 \cite[section 2]{Stong-complex}, and the properties of the tangentially stable almost complex manifolds 
 which define the unitary equivariant bordism groups, including the ones presented above, can be read in \cite[XXVIII, section 3]{May-book}, \cite[section 2]{Hanke} and \cite[section 5]{AngelGomezUribe}.

\subsection{Equivariant bordism for families}
\label{ss:families}
The study of the equivariant bordism groups led Conner and Floyd to restrict their attention to manifolds
with prescribed isotropy groups \cite{ConnerFloyd-book, ConnerFloyd-Odd}. The allowed isotropy groups are therefore organized
in families of subgroups of $G$ which are closed under conjugation and under taking subgroups. For any such family
of subgroups $\calf$ there is a classifying $G$-space $E \calf$ for actions whose isotropy groups lie on $\calf$. This
$G$-space is characterized by its properties on fixed points sets, namely, the fixed point set $E \calf^H$
is contractible whenever $H \in \calf$ and empty otherwise. The construction of $E \calf$ can be carried out in such a way
that an inclusion of families $\calf ' \subset \calf$ induces a $G$-cofibration $E \calf' \to E \calf$ \cite[section 1.6]{tomDieck-transformation}.

The equivariant bordism groups $\Omega_*^G\{\calf, \calf'\}$ for a pair of families $\calf ' \subset \calf$ are 
the bordism groups of $G$-equivariant compact manifolds with boundary $(M, \partial M)$ such that
the isotropy groups of $M$ lie in $\calf$ and the isotropy groups of its boundary $\partial M$ lie in $\calf'$. Following \cite[p.310]{tomDieck-Orbit-I}
one may define the bordism of groups for a pair of $G$-spaces $(X,A)$ and a pair of families as follows
\begin{equation}
\Omega^G_*\{\calf, \calf'\}(X,A) := \Omega^G_*(X \times E \calf, X \times E\calf' \cup A \times E \calf);
\end{equation}
or equivalently,  using a more geometrical description \cite{Stong-complex}.

\subsection{Long exact sequence for families}
\label{ss:long}
Whenever three families are related by the inclusions $ \calf'' \subset \calf ' \subset \calf$ there is induced a
 long exact sequence in bordism \cite[Thm. 5.1]{ConnerFloyd-Odd}
\begin{equation}
\cdots \to \Omega_*^G\{\calf', \calf''\} \to \Omega_*^G\{\calf, \calf''\} \to \Omega_*^G\{\calf, \calf'\} \stackrel{\partial}{\to} \Omega_{*-1}^G\{\calf', \calf''\} \to \cdots .
\end{equation}

\subsection{Conner-Floyd spectral sequence}
\label{ss:CFSS}
More generally, associated to the families $\calf_0 \subset \calf_1 \subset \cdots \subset \calf_k=\calf$ there is a spectral sequence
converging to $ \Omega^G_n\{\calf\}$, whose filtration is 
\begin{equation}
F_p \Omega^G_n\{\calf\} := \Image( \Omega^G_n\{\calf_p\} \to  \Omega^G_n\{\calf\}).
\end{equation}
This spectral sequence is usually called the {\it Conner-Floyd spectral sequence}, its first page is given by
\begin{equation}
E^1_{p,q} \cong \Omega^G_{p+q}\{\calf_p, \calf_{p-1}\},
\end{equation}
and the differentials are induced by the boundary maps. The first page of this spectral sequence
might be difficult to calculate, but whenever the pair of families $\calf_{p-1} \subset \calf_p$ are adjacent (see below for the definition), fixed point
methods together with the classification of the normal bundles can make them computable in terms of non-equivariant bordism groups.

\subsection{Equivariant bordism for adjacent families}
\label{ss:adjacent}
A pair of families $\calf' \subset \calf$ are called {\it adjacent} whenever they differ by the conjugacy class $(K)$
of a subgroup $K$, in other words $\calf - \calf' = (K)$. A manifold $(M, \partial M)$ in $\Omega^G_n\{\calf, \calf'\}$
is cobordant to the $G$-equivariant tubular neighborhood of the fixed point set of all the subgroups of $G$ conjugate to $K$ (all isotropy groups in the complement of the tubular nighborhood belong to $\calf'$; the explicit bordism 
can be found in \cite[Lem. 5.2]{ConnerFloyd-Odd}). The fixed points $M^K$ of $K$
become a free $W_K:=N_GK/K$ space and the $G$-equivariant tubular neighborhood can be reconstructed from a specific
$W_K$-equivariant twisted bundle over $M^K$ by extending the $N_GK$ space to a $G$ space. Hence, if $M^K$ is of dimension $n-k$
and $M^K/W_K$ is connected, its tubular neighborhood can be recovered from a map $M^K \to C_{N_GK,K}(k)$
where $C_{N_GK,K}(k)$ is a $W_K$-space which classifies the $N_GK$-equivariant tubular neighborhoods of rank $k$ around $K$-fixed points \cite[Eqn. (2.5)]{Uribe-evenness}.
In the unitary case there is a 
 decomposition in terms of non-equivariant unitary bordism groups \cite[Thm. 2.8]{Uribe-evenness} 
\begin{equation}
\Omega^{U,G}_n\{\calf, \calf'\} := \bigoplus_{2k \leq n}\Omega^{U}_{n-2k}\left(C_{N_GK,K}(k) \times_{W_K}EW_K\right),
\label{decomposition adjacent families}
\end{equation}
and a similar one in the case of oriented bordisms \cite[Thm. 2.11]{Angel-cobordism}
This localization theorem will become very useful in what follows once we apply it for the study of the equivariant bordism groups of surfaces.

\subsection{$G$ fixed points}
\label{ss:Gfixed}
For every subgroup $K$ of $G$ denote by  $\cala K$  the family of all subgroups of $K$ and its conjugates in $G$, and denote
by $\calp K$ the family $\cala K - (K)$ of all \emph{proper} subgroups of $K$, and its conjugates in $G$.
 The localization map at the fixed points of the whole group action 
\begin{equation}
\Omega_*^G \to \Omega_*^G\{ \cala G, \calp G\} 
\end{equation}
together with the decomposition into non-equivariant bordisms groups presented in \eqref{decomposition adjacent families},
has been a powerful tool for determining the equivariant bordism groups for abelian groups (see for instance \cite{tomDieck-Bordism-G, Loffler, Hanke}). In this particular case, the bordism groups $\Omega_*^G\{ \cala G, \calp G\} $ are isomorphic
to the non-equivariant bordism groups of products of complex Grassmannians in the unitary case, and of products
of real, complex and quaternionic Grassmannians in the oriented case.

\subsection{Rowlett spectral sequence}
\label{ss:RSS}
We still need another spectral sequence suited for understanding the equivariant bordism groups of pairs of families. 
This spectral sequence was constructed by Rowlett in \cite[Prop. 2.1]{Rowlett} and therefore its name.
Consider a pair of families $\calf' \subset \calf$ that are also families of subgroups of the normal subgroup $K$ of $G$ and $(M, \partial M)$ in $\Omega^G_n\{\calf, \calf'\}$.
Then it is easy to see that the classifying map $M/K \to EW_K$ of the free $W_K=G/K$ action of the quotient induces an isomorphism
of bordism groups $\Omega_*^G\{\calf, \calf'\} \stackrel{\cong}{\to} \Omega_*^G\{\calf, \calf'\}(EW_K)$
by mapping $M$ to the composition $M \to M/K \to EW_K$; the inverse is simply induced by the map $EW_K \to *$.
The space $EW_K$ can be constructed as a CW-complex whose $n$-skeleton $(EW_K)^n$ is constructed from $(EW_K)^{n-1}$
by attaching a finite number of copies of $W_K \times B^n$ with $W_K$ acting trivially on the $n$-dimensional balls. One 
may filter $\Omega_*^G\{\calf, \calf'\}(EW_K)$ by the images under the inclusion of the skeletons
$\Omega_*^G\{\calf, \calf'\}((EW_K)^n)$ and therefore one obtains a spectral sequence converging to 
$\Omega_*^G\{\calf, \calf'\}$ whose first page becomes:
\begin{align}
E^1_{p,q} & \cong \Omega_{p+q}^G\{\calf, \calf'\}((EW_K)^{p}, (EW_K)^{p-1}) \nonumber\\
& \cong H_p((EW_K)^{p}, (EW_K)^{p-1}) \otimes_{W_K}
\Omega_{q}^K\{\calf, \calf'\},
\end{align}
and whose second page is:
\begin{equation}
E^2_{p,q} \cong H_p(W_K, \Omega_{q}^{K}\{\calf, \calf'\}),
\end{equation}
where the action of an element of $W_K$ on an $K$-manifold $M$ consists of the same manifold $M$
endowed with the conjugate $K$-action. The zero-th column consists of the $W_K$-coinvariants
\begin{equation}
E_{0,q}^2 \cong \left( \Omega_{q}^{K}\{\calf, \calf'\} \right)_{W_K},
\end{equation}
and the edge homomorphism 
\begin{equation}
\Omega_{q}^{K}\{\calf, \calf'\} \cong E_{0,q}^1 \to E_{0,q}^2 \to E_{0,q}^\infty \to \Omega_{q}^{G}\{\calf, \calf'\}
\end{equation}
is simply the extension homomorphism factorizing through the coinvariants
\begin{equation} \label{edge homomorphism}
\Omega_{q}^{K}\{\calf, \calf'\} \to (\Omega_{q}^{K}\{\calf, \calf'\})_{W_K} \to  \Omega_{q}^{G}\{\calf, \calf'\}, \ \ \ M \mapsto M \times_{K} G.
\end{equation}

In characteristic zero the spectral sequence collapses on the zero-th column of the second page. Since in characteristic zero
the invariants and the coinvariants are isomorphic we conclude that the extension homomorphism induces an isomorphism
\begin{equation}
\Omega_{*}^{K}\{\calf, \calf'\}^{W_K} \otimes \IQ \stackrel{\cong}{\to} \Omega_{*}^{G}\{\calf, \calf'\} \otimes \IQ.
\label{Rowlett spectral sequence char 0}
\end{equation}

In order to find the torsion classes in $\Omega_{*}^{G}$ we will construct the inverse map of the isomorphism
\eqref{Rowlett spectral sequence char 0} for every pair of adjacent families of groups. This map will be simply given
by the localization at fixed points and will be the subject of the next section.

\section{Localization at fixed points}
\label{s:localization}
For every subgroup $K$ of $G$ let us define the fixed-point homomorphism
\begin{equation}
f_K \circ r^G_K: \Omega_{*}^{G} \to \Omega_{*}^{K}\{\cala K,\calp K\}
\end{equation} 
as the composition of the restriction homomorphism
$
r^G_K: \Omega_{*}^{G} \to \Omega_{*}^{K}
$
together with the localization at $K$-fixed points
\begin{align} \label{localization K-fied points}
f_K: \Omega_{*}^{K} \to \Omega_{*}^{K}\{\cala K,\calp K\}.
\end{align}
The composition $f_K \circ r^G_K$ takes a $G$-manifold
and maps it to the tubular neighborhood $N$ of the $K$-invariant points $M^K$. Since on the complement of $N$ in $M$
there are no points with isotropy $K$, the tubular neighborhood $N$ and $M$ become cobordant in $\Omega_{*}^{K}\{\cala K,\calp K\}$ \cite[Lem. 5.2]{ConnerFloyd-Odd}. Since $N_GK$ acts on the normal bundle $N$ of $M^K$,  the localization at $K$-fixed points
lands in the $W_K$ fixed submodule. Therefore the fixed-point homomorphism becomes
\begin{equation}
f_K \circ r^G_K: \Omega_{*}^{G} \to \Omega_{*}^{K}\{\cala K,\calp K\}^{W_K}.
\end{equation}

Also, for every pair of families of subgroups in $G$, we have the localized fixed-point homomorphism
\begin{equation} \label{fixed-point homomorphism}
\phi_*: \Omega_{*}^{G}\{\calf, \calf'\} \to \bigoplus_{(K) \subset \calf - \calf'} \Omega_{*}^{K}\{\cala K,\calp K\}^{W_K}.
\end{equation}
This homomorphism applied to the pair of adjacent families $\{\cala K, \calp K\}$, composed with the 
edge homomorphism of the Rowlett spectral sequence \eqref{edge homomorphism},
gives us the following maps
\begin{equation}
\Omega^K_*\{\cala K, \calp K\}_{W_A} \to \Omega^G_*\{\cala K, \calp K\} \stackrel{\phi}{\to} \Omega^K_*\{\cala K, \calp K\}^{W_A}.
\end{equation}
In characteristic zero this composition is an isomorphism and therefore we obtain the isomorphism
\begin{equation}
\phi_*: \Omega^G_*\{\cala K, \calp K\} \otimes \IQ \stackrel{\cong}{\to} \Omega^K_*\{\cala K, \calp K\}^{W_A}\otimes \IQ,
\end{equation}
which becomes the inverse of the map in \eqref{Rowlett spectral sequence char 0} for adjacent families.

Applying the Conner-Floyd spectral sequence we see that the fixed-point homomorphism of \eqref{fixed-point homomorphism}
in characteristic zero becomes an isomorphism, and therefore we quote:

\begin{theorem}\cite[Thm. 1.1]{Rowlett}
The fixed-point homomorphism in characteristic zero is an isomorphism
\begin{equation}
\phi_* \otimes \IQ: \Omega_{*}^{G} \otimes \IQ \stackrel{\cong}{\to} \bigoplus_{(K)} \Omega_{*}^{K}\{\cala K,\calp K\}^{W_K} \otimes \IQ.
\end{equation}
\label{th:Rowlett}
\end{theorem}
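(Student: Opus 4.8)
The strategy is to feed the adjacent-family statement established above --- that for an adjacent pair of families $\phi_*\otimes\IQ$ realizes the inverse of the Rowlett isomorphism \eqref{Rowlett spectral sequence char 0} --- into the Conner-Floyd spectral sequence for a complete filtration of the family of all subgroups of $G$. I would fix a linear order $(K_1),\dots,(K_m)$ of the conjugacy classes of subgroups of $G$ refining the subconjugacy partial order, set $\calf_0=\emptyset$ and $\calf_p=\calf_{p-1}\cup(K_p)$; then each pair $\calf_{p-1}\subset\calf_p$ is adjacent with $\calp K_p\subset\calf_{p-1}$, while $\calf_m=\cala G$ and $\Omega^G_*\{\cala G\}=\Omega^G_*$. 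The Conner-Floyd spectral sequence of \S\ref{ss:CFSS} for this filtration has only $m$ filtration steps, hence converges to $\Omega^G_*$, and by the reduction of a relative bordism class to the $G$-tubular neighborhood of a fixed set recalled in \S\ref{ss:adjacent} its first page is $E^1_{p,q}\cong\Omega^G_{p+q}\{\calf_p,\calf_{p-1}\}\cong\Omega^G_{p+q}\{\cala K_p,\calp K_p\}$. Since $\IQ$ is flat over $\IZ$, rationalization commutes with the spectral sequence, and by the adjacent-family result $E^1_{p,q}\otimes\IQ\cong\Omega^{K_p}_{p+q}\{\cala K_p,\calp K_p\}^{W_{K_p}}\otimes\IQ$. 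Reading off associated graded pieces gives the a priori bound $\dim_\IQ(\Omega^G_n\otimes\IQ)\le\sum_p\dim_\IQ\bigl(\Omega^{K_p}_n\{\cala K_p,\calp K_p\}^{W_{K_p}}\otimes\IQ\bigr)$, i.e. the rank of $\Omega^G_n$ is at most that of the $n$-th piece of the proposed right-hand side.

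For the matching lower bound --- and with it surjectivity of $\phi_*\otimes\IQ$ --- I would use the extension homomorphism. For a subgroup $K$ and $\beta\in\Omega^K_*\{\cala K,\calp K\}$, the manifold $\beta\times_K G$ has all isotropy groups subconjugate in $G$ to subgroups of $K$, so it defines a class in $\Omega^G_*$, and by \eqref{edge homomorphism} this depends only on the class of $\beta$ in the $W_K$-coinvariants, giving $\tilde e_K\colon\Omega^K_*\{\cala K,\calp K\}_{W_K}\to\Omega^G_*$. The composite $\phi_*\circ\tilde e_K$, read into the $(L)$-summands of the right-hand side, has $(L)$-component zero unless $(L)$ is subconjugate to $(K)$ (for lack of $L$-fixed points otherwise), and its $(K)$-component is the norm map $\Omega^K_*\{\cala K,\calp K\}_{W_K}\to\Omega^K_*\{\cala K,\calp K\}^{W_K}$, which becomes an isomorphism after $-\otimes\IQ$ (coinvariants and invariants of the finite group $W_K$ coincide rationally). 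With the conjugacy classes ordered so that subconjugate ones come first, the matrix of $\phi_*\circ\bigl(\bigoplus_{(K)}\tilde e_K\bigr)\otimes\IQ\colon\bigoplus_{(K)}\Omega^K_*\{\cala K,\calp K\}_{W_K}\otimes\IQ\to\bigoplus_{(K)}\Omega^K_*\{\cala K,\calp K\}^{W_K}\otimes\IQ$ is triangular with the rationally invertible norm maps on the diagonal, hence is itself a rational isomorphism. In particular $\phi_*\otimes\IQ$ is surjective and $\bigoplus\tilde e_K\otimes\IQ$ is a split injection, so $\dim_\IQ(\Omega^G_n\otimes\IQ)$ is at least the rank of the $n$-th piece of the right-hand side; together with the reverse bound from the Conner-Floyd spectral sequence the two dimensions agree and are finite (equivariant bordism groups are finitely generated), and a surjection between finite-dimensional spaces of equal dimension is an isomorphism. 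This is the assertion.

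Equivalently, one may avoid the dimension count and run an induction on $p$ using the long exact sequence of the triple $\emptyset\subset\calf_{p-1}\subset\calf_p$: the homomorphism $\tilde e_{K_p}$ rationally splits the projection $\Omega^G_*\{\calf_p\}\otimes\IQ\to\Omega^G_*\{\calf_p,\calf_{p-1}\}\otimes\IQ$, so the long exact sequence breaks into short exact pieces, and the five lemma carries the inductive isomorphism $\phi^{(p-1)}_*\otimes\IQ$ together with the adjacent-family isomorphism to $\phi^{(p)}_*\otimes\IQ$, reaching the theorem at $p=m$.

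The step I expect to carry the real weight --- though it is precisely the content supplied by the material preceding the theorem --- is the adjacent-family identification $\Omega^G_*\{\cala K,\calp K\}\otimes\IQ\xrightarrow{\cong}\Omega^K_*\{\cala K,\calp K\}^{W_K}\otimes\IQ$: one must combine the collapse of the Rowlett spectral sequence onto its zeroth column over $\IQ$, the reduction of $\Omega^G_*\{\cala K,\calp K\}$ to a $K$-fixed-point relative group over $N_GK$, and the rational invertibility of the norm map $\Omega^K_*\{\cala K,\calp K\}_{W_K}\to\Omega^K_*\{\cala K,\calp K\}^{W_K}$. Granting that, together with the triangular surjectivity of $\phi_*$, the global statement follows formally.
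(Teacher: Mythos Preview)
Your overall strategy---filter by adjacent families and feed the adjacent-pair isomorphism into the Conner-Floyd spectral sequence---is exactly what the paper does; its entire argument is the single clause ``Applying the Conner-Floyd spectral sequence we see that the fixed-point homomorphism \ldots\ in characteristic zero becomes an isomorphism''. So the approach is right.

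There is, however, a real gap in your main argument: the map $\tilde e_K\colon \Omega^K_*\{\cala K,\calp K\}_{W_K}\to\Omega^G_*$ is not well defined as written. A class $\beta\in\Omega^K_*\{\cala K,\calp K\}$ is represented by a $K$-manifold \emph{with boundary} (isotropy of $\partial\beta$ lying in $\calp K$), so $G\times_K\beta$ also has boundary and does not give an element of the absolute group $\Omega^G_*$. The edge homomorphism \eqref{edge homomorphism} you cite lands only in the relative group $\Omega^G_*\{\cala K,\calp K\}$; lifting it to $\Omega^G_*$ means filling in that boundary, which is precisely the rational vanishing of $\partial$ you are trying to establish. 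The triangular-matrix surjectivity step is therefore circular as stated.

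Your five-lemma alternative is the correct repair, but the splitting should not be attributed to $\tilde e_{K_p}$ either---that runs into the same problem. What actually drives the induction is the compatibility $\phi^{(p-1)}_*\circ\partial=0$: for $[M]\in\Omega^G_n\{\calf_p,\calf_{p-1}\}$ and any $j<p$, the $K_j$-localization of $\partial M$ is the disk bundle of the normal bundle $\nu$ of $M^{K_j}$ restricted to $\partial(M^{K_j})=(\partial M)^{K_j}$, and this is null in $\Omega^{K_j}_{n-1}\{\cala K_j,\calp K_j\}$ via the disk bundle of $\nu$ over all of $M^{K_j}$ (its sphere-bundle face has isotropy in $\calp K_j$). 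With this in hand, $\phi_*$ gives a map from the long exact sequence of the pair $(\calf_p,\calf_{p-1})$ to the split sequence $0\to\bigoplus_{j<p}\to\bigoplus_{j\le p}\to(\cdot)^{W_{K_p}}\to 0$; the inductive hypothesis on $\phi^{(p-1)}_*\otimes\IQ$ and the adjacent-family isomorphism then give $\phi^{(p)}_*\otimes\IQ$ an isomorphism by the five lemma. No dimension count or extension map is needed.
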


We would like to remark that the rational isomorphism obtained in 
Theorem \ref{th:Rowlett} by localizing on fixed points
holds in general for any rational $G$-equivariant
homology theory whose coefficients form a rational $G$-Mackey functor \cite[Thm A.16]{GreenleesMay-Tate}, \cite[Cor. 3.4.28]{Schwede}.

\subsection{Kernel of fixed-point homomorphism}
\label{ss:kernel}
In the unitary case, the equivariant bordism group $\Omega^{U,K}_*\{\cala K,\calp K\}$ is isomorphic to the unitary bordism group
of a disjoint union of products of complex Grassmannians \cite[Thm. 2.8]{Uribe-evenness}. Therefore, the group $\Omega_{*}^{U,K}\{\cala K,\calp K\}$ is a free
$\Omega^U_*$-module on even dimensional generators. Hence, by Theorem \ref{th:Rowlett}, we obtain the following result:

\begin{lemma}
The group of torsion elements in $\Omega^{U,G}_*$ is isomorphic to the kernel of the fixed-point homomorphism $\phi$ of \eqref{fixed-point homomorphism}:
\begin{equation}
\Tor_\IZ (\Omega^{U,G}_*) = \Ker(\phi^U_*).
\end{equation}
\label{l:torsion}
\end{lemma}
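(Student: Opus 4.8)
The plan is to combine Theorem \ref{th:Rowlett} with the structural fact that each target group $\Omega^{U,K}_*\{\cala K,\calp K\}$ is torsion-free. First I would recall that, by the decomposition \cite[Thm. 2.8]{Uribe-evenness} recorded just above, $\Omega^{U,K}_*\{\cala K,\calp K\}$ is isomorphic to the unitary bordism of a disjoint union of products of complex Grassmannians; since the complex bordism ring $\Omega^U_*$ is a polynomial ring on even-dimensional generators and these Grassmannians have only even-dimensional cells, this group is a free $\Omega^U_*$-module concentrated in even total degree, in particular a free abelian group. A finite direct sum of such groups, and any $W_K$-invariant submodule, is again torsion-free, so the target $\bigoplus_{(K)} \Omega^{U,K}_*\{\cala K,\calp K\}^{W_K}$ of the fixed-point homomorphism $\phi^U_*$ is torsion-free.

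Next I would observe that $\Tor_\IZ(\Omega^{U,G}_*)\subseteq \Ker(\phi^U_*)$ is automatic: a torsion element maps into a torsion-free group and hence must map to zero. The reverse inclusion is where Theorem \ref{th:Rowlett} enters. Tensoring the short exact sequence $0\to \Ker(\phi^U_*)\to \Omega^{U,G}_*\to \Image(\phi^U_*)\to 0$ with $\IQ$ and using that $\Image(\phi^U_*)$ is torsion-free (so $\Image(\phi^U_*)\otimes\IQ$ injects into the target), we get that $\Ker(\phi^U_*)\otimes\IQ$ is the kernel of $\phi^U_*\otimes\IQ$. But Theorem \ref{th:Rowlett} says $\phi^U_*\otimes\IQ$ is an isomorphism, so $\Ker(\phi^U_*)\otimes\IQ=0$; since $\Ker(\phi^U_*)$ is a finitely generated abelian group (as $\Omega^{U,G}_*$ is, degreewise), a group that vanishes after $\otimes\IQ$ is a torsion group, hence $\Ker(\phi^U_*)\subseteq \Tor_\IZ(\Omega^{U,G}_*)$. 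Combining the two inclusions gives the claimed equality.

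The only point requiring care — and the one I expect to be the main obstacle to write cleanly — is the torsion-freeness of the target, specifically making sure that passing to the $W_K$-invariant submodule preserves freeness of the abelian group: this is fine because a subgroup of a free abelian group is free abelian, but one should be slightly careful that the $W_K$-action in the Rowlett setup is by the conjugation action described in Section \ref{ss:RSS}, so the invariants $\Omega^{U,K}_*\{\cala K,\calp K\}^{W_K}$ are literally a subgroup of a torsion-free group. Everything else is formal once Theorem \ref{th:Rowlett} and the Grassmannian decomposition are in hand; in particular no spectral-sequence convergence issue arises here because we only need the rational isomorphism already packaged in Theorem \ref{th:Rowlett}.
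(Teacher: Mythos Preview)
Your argument is correct and is exactly the approach the paper has in mind: the paragraph preceding the lemma records that each $\Omega^{U,K}_*\{\cala K,\calp K\}$ is a free $\Omega^U_*$-module (hence torsion-free), and the lemma is then meant to follow immediately from this together with Theorem~\ref{th:Rowlett}, just as you spell out. One small simplification: you do not need finite generation of $\Omega^{U,G}_*$ at all, since for \emph{any} abelian group $A$ one has $A\otimes\IQ=0$ if and only if $A$ is torsion (use that $\IQ$ is flat over $\IZ$).
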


Whenever a group $G$ satisfies the evenness conjecture in equivariant unitary bordism then the fixed-point homomorphism $\phi_*^U$ is automatically a monomorphism. This is the case
for abelian \cite{Loffler} and metacyclic  \cite{Rowlett-metacyclic} groups. In the next section we will show that there are groups $G$ such that the kernel of 
the fixed-point homomorphism is not trivial in dimension 2, thus defining torsion elements in $\Omega^{U,G}_2$. This 
fact refutes the evenness conjecture in the general case.

In the oriented case there are many torsion classes in the bordism ring $\Omega^{SO}_*$, all of order 2 \cite{Wall, Stong-notes}. Therefore we will
be mainly interested in the torsion classes of the equivariant bordism group $\Omega^{SO,G}_*$ which are
trivial under the fixed-point homomorphism $\phi^{SO}_*$.

A very interesting and more general question associated to the equivariant oriented case would be the following:

{\it Are there $G$-equivariant oriented manifolds whose bordism class vanishes under the fixed-point
homomorphism $\phi^{SO}$ which do not bound equivariantly?}

In the next section  we answer this question for dimension 2. The 3-dimensional case (with its interesting application to Chern-Simons theory)
 remain open for the interested reader.

Note that the equivariant bordism group $\Omega^{SO,K}_*\{\cala K,\calp K\}$ is in general more difficult to
calculate than the unitary one. On the one hand the fixed point set $M^K$ need not be orientable, and on the other,
the normal bundles are classified by products of real, complex and quaternionic Grassmannians. 

Since we are mainly interested in the 2 and the 3-dimensional bordism groups, we know that all fixed points are of real codimension 0, 2 in the unitary case because the normal bundles are endowed with a complex structure, see equation \eqref{complex structure normal bundle}, and 0, 2 or 3 in the oriented case, because there are no 1-dimensional real representations preserving the orientation. Here the real codimension of the fixed points
match the real dimension of the representation of the respective isotropy group.

In the case that the fixed points are of real codimension 2, the normal bundle is of complex dimension 1 in the unitary case and
of real dimension 2 in the oriented case. 
Since the 2-dimensional oriented representations can be parametrized by the 1-dimensional complex 
representations, we may denote by  
$ \Irr_{\mathbb{C}}^1(K)$
 the set of 1-dimensional non-trivial irreducible complex 
representations of the group $K$. The complex conjugation map on $ \Irr_{\mathbb{C}}^1(K)$
acts freely on the representations of complex type $ \Irr_{\mathbb{C}}^1(K)_\mathbb{C}$ 
and acts trivially on the representations of real type $ \Irr_{\mathbb{C}}^1(K)_\mathbb{R}$.
Denote by $ \Irr_{\mathbb{C}}^1(K)_\mathbb{C}/\mathrm{conj}$ the quotient of representations
of complex type by complex conjugation and denote by $\Irr_{\mathbb{R},SO}^3(K)$
the set of 3-dimensional irreducible real representations of $K$ in the category of oriented representations.

\begin{proposition} \label{p:23AGPG}
Let $K$ be a finite group. Then the relative oriented equivariant bordism groups are 
\begin{align} \label{SO,2,K}
\Omega^{SO,K}_2\{\cala K,\calp K\} & = \left( \bigoplus_{ \Irr_{\mathbb{C}}^1(K)_\mathbb{C}/\mathrm{conj}} \mathbb{Z} \right) \oplus \left( \bigoplus_{\Irr_{\mathbb{C}}^1(K)_\mathbb{R}} \mathbb{Z}/2 \right), \\
\label{SO,K,3}
\Omega^{SO,K}_3\{\cala K,\calp K\} & = \bigoplus_{\Irr_{\mathbb{R},SO}^3(K)} \mathbb{Z}/2,
\end{align}
and the relative equivariant unitary bordism groups are
\begin{align}
\label{U,2,K} \Omega^{U,K}_2\{\cala K,\calp K\} & = \Omega_2^U \oplus \bigoplus_{\Irr_{\mathbb{C}}^1(K)} \mathbb{Z},  \\
\label{U,3,K} \Omega^{U,K}_3\{\cala K,\calp K\} & =0.
\end{align}
\end{proposition}

\begin{proof}
Let us start with the relative oriented equivariant bordism groups. Any manifold $M$ in $\Omega^{SO.K}_*\{\cala K,\calp K\}$ is equivalent in the bordism group to the normal bundle $N$ around the fixed point set $M^K$ \cite[Lem. 5.2]{ConnerFloyd-Odd}. 
Whenever $M$ is connected, of dimension 2, and $M \neq M^K$, this normal bundle is classified by a map
\begin{align} \label{classifying map normal bundle}
M^K \to \bigsqcup_{\Irr_\mathbb{C}^1(K)} BU(1)
\end{align} where the $K$ action on the bundle around the point is encoded by the irreducible representation (here we are using that $SO(2) \cong U(1)$). Note that whenever $V$ is a non-trivial 1-dimensional complex representation,
 the unit ball $B(\mathbb{R} \oplus V)$ bounds the union of $B(V)$ and $B(\overline{V})$ where
 $\overline{V}$ denotes the representation $V$ with reverse orientation. This implies that in the relative oriented bordism group
 $\Omega^{K, SO}_2\{\cala K,\calp K\}$ we have the equation $B(V)+B(\overline{V})=0$. Hence whenever $V$ is of complex type, and therefore $V$ 
 is not isomorphic to $\overline{V}$, the relative oriented bordism group
 $\Omega^{K, SO}_2\{\cala K,\calp K\}$ counts the difference between the number of $K$-fixed points with normal bundle isomorphic to $\overline{V}$
 and the number of $K$-fixed points with normal bundle isomorphic to $V$; these are the integral invariants. If $V$ is of real type and hence $V$ is isomorphic to $\overline{V}$, the ball $B(\mathbb{R} \oplus V)$
 bounds twice $B(V)$ and the relative oriented bordism group
 $\Omega^{K, SO}_2\{\cala K,\calp K\}$ counts the parity of the number of points with normal bundle isomorphic to $V$; these are the $\mathbb{Z}/2$ invariants. This argument proves equation \eqref{SO,2,K}.
 
 For the 3-dimensional case, the codimension 2 fixed points become circles, and since $\Omega^{SO}_1(BU(1))=0$,
 we conclude that we only need to focus our attention on the isolated points of the $K$ action. Around each isolated fixed point of the action we
 obtain a 3-dimensional real and oriented representation $V$ of $K$. This representation is irreducible in the category of oriented representations
 even though it may be not irreducible as a real representation. Note that the splitting of the representation as the product of two non-oriented representations 
 implies that one must be a sign representation and the other must factor through a dihedral representation in $O(2)$. Hence the product of these two
 representations will be equivalent to a representation that factors through an oriented dihedral representation in $SO(3)$ which is irreducible
 in the category of oriented representations. Now, the unit ball
 $B(\mathbb{R} \oplus V)$ bounds twice $B(V)$ because $V$ and $\overline{V}$ are isomorphic. Therefore we can conclude that
 the isomorphism of equation \eqref{SO,K,3} counts the parity of the number of fixed points of $K$ with the prescribed representation on its normal bundle.

 The relative unitary bordism groups are much simpler. The 3-dimensional case of equation
 \eqref{U,3,K} is trivial because both $\Omega^U_3$ and $\Omega_1^U(BU(1))$ are trivial. The 2-dimensional
 case of \eqref{U,2,K} detects half of the fisrt Chern number of the surface whenever the action is trivial,
 and it counts the number of fixed points with prescribed representation on their normal bundle. Here
 we are using that the isomorphism $\Omega^U_2 \stackrel{\cong}{\to} \mathbb{Z}$ is given by
 the assignment $[\Sigma] \mapsto \frac{c_1(\Sigma)}{2}$ where $c_1(\Sigma)$ is the first Chern number of the surface.

\end{proof}

As a consequence of the previous result we see that the 2-dimensional bordism classes of interest
have no isolated fixed points for any subgroup $K$ of $G$.

\begin{corollary}
The torsion subgroups of both unitary and oriented equivariant bordism of surfaces, are respectively isomorphic
to the kernel of the associated fixed point homomorphism,
\begin{equation}
\Tor_\IZ (\Omega^{U,G}_2) = \Ker(\phi^U_2), \ \ \Tor_\IZ (\Omega^{SO,G}_2) = \Ker(\phi^{SO}_2).
\end{equation}
Therefore the equivariant bordism groups $\Ker(\phi_2^{SO})$ and $\Ker(\phi_2^{U})$  are generated
by $G$-surfaces without isolated $K$-fixed points for any  subgroup $K$ of $G$; in the unitary case it is moreover required that 
the surfaces have trivial first Chern number.
\label{c:nofixed}
\end{corollary}

\begin{proof} 
In Proposition \ref{p:23AGPG} we have shown that the relative oriented and unitary bordism groups 
$\Omega^{K}_2\{\cala K,\calp K\}$ are torsion-free for all subgroups $K$ of $G$, except
in the oriented case whenever $K$ has 1-dimensional complex representations of real type;
such representations come from non-trivial elements in
$\Hom(K,\mathbb{Z}/2)$. Whenever a closed oriented surface $\Sigma$ has one $K$-fixed point
whose normal bundle has the structure of a non-trivial element in $\Hom(K,\mathbb{Z}/2)$, then
the connected component of such $K$-fixed point has an induced action of $\mathbb{Z}/2$. Since
the Euler characteristic of the connected component is even, the number
of fixed points of this  $\mathbb{Z}/2$-action must also be even. Hence the original action of $K$ on 
this connected component must have an even number of fixed points, and all of them will have isomorphic complex representation of real type on the normal bundles.

The previous argument shows that the image of the fixed point homomorphism is torsion-free in
both oriented and unitary cases. Therefore by Theorem \ref{th:Rowlett} we can conclude 
that the torsion classes are generated by $G$-equivariant manifolds without isolated
$K$-fixed points for any subgroup $K$ of $G$, and in the unitary case it is furthermore required that
the underlying surface has trivial first Chern number.

\end{proof}

The presence of the platonic groups $A_4$, $\mathfrak{S}_4$, $A_5$ or the dihedral groups $D_{2k}$ as subgroups
of a general group $G$ makes the understanding of the bordism group $\Omega_3^{SO,G}$ more interesting. We need
first a definition.

\begin{definition}
Let $M$ be a $G$-manifold $M$ (oriented or unitary). Define the {\it{ramification locus}} of the $G$-action as the space
\begin{equation}
\overline{M} := \bigcup_{K \subset G \atop K \neq \{1\}} M^K,
\end{equation}
where $M^K$ denotes the space of fixed points of the subgroup $K$.
\end{definition}

Let us start with the dihedral groups.

\begin{proposition} \label{3-d bordism of dihedral}
The equivariant bordism groups $\Omega_3^{SO,D_{2k}}$ are generated by equivariant manifolds
whose fixed-points are all of codimension 0 or empty. In particular, the fixed-point homomorphism $\phi_3^{SO}$ is trivial.
\end{proposition}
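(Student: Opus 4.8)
\emph{Strategy.} The plan is to show that every $D_{2k}$-manifold $M^{3}$ is equivariantly bordant to one all of whose fixed-point sets have codimension $0$ or are empty; the triviality of $\phi_3^{SO}$ is then immediate, since for such a manifold the tubular neighborhood of $M^{K}$ used to define $\phi$ is a closed $K$-manifold on which $K$ acts trivially, hence zero in $\Omega_3^{SO}$. So the task is to remove the positive-codimension part of the ramification locus $\overline{M}$ by equivariant bordism. In an oriented $3$-manifold with an orientation-preserving action there are no codimension-$1$ fixed sets (a nontrivial normal action on a line is orientation-reversing); a codimension-$2$ fixed set forces its normal $2$-plane to be rotated faithfully, hence the stabilizer to be cyclic; and a codimension-$3$ (isolated) fixed point forces the stabilizer to embed in $SO(3)$ without a nonzero fixed vector, hence --- there being no platonic subgroups of $D_{2k}$ --- to be a non-cyclic dihedral group $D_{2d}$. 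Thus away from its codimension-$0$ part, $\overline{M}$ is a $1$-complex whose edges are circles with cyclic stabilizer and whose vertices are isolated fixed points with dihedral stabilizer.

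\emph{Eliminating the codimension-$3$ vertices.} Fix a representative $K=D_{2d}=\langle r,s\mid r^{d}=s^{2}=1,\ srs=r^{-1}\rangle$ of a conjugacy class of dihedral subgroups, write $C=\langle r\rangle$, and let $\IR_{\mathrm{sgn}}$ be the line on which $C$ acts trivially and $s$ acts by $-1$. At an isolated $K$-fixed point $p$ the tangent representation has no trivial summand and is orientation-preserving, so it must be $\IR_{\mathrm{sgn}}\oplus V$ with $V$ a faithful $2$-dimensional real representation of $K$, and by the description of $\Omega_3^{SO,D_{2d}}\{\cala D_{2d},\calp D_{2d}\}$ in Proposition \ref{p:23AGPG} only such points contribute to $\phi_3^{SO}$. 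Since $V$ is faithful we have $V^{C}=0$, so $(T_pM)^{C}=\IR_{\mathrm{sgn}}$ is one-dimensional and $p$ lies on a circle component $\gamma$ of the closed $1$-manifold $M^{C}$. As $s$ normalizes $C$ it permutes the components of $M^{C}$, and $s(\gamma)=\gamma$ because $p\in\gamma\cap s(\gamma)$. The involution $s|_{\gamma}$ is not the identity --- otherwise $\gamma\subseteq M^{\langle C,s\rangle}=M^{K}$, contradicting that $p$ is isolated in $M^{K}$ --- so it is a reflection of $\gamma$ with exactly one further fixed point $q$; since the normal rotation number is locally constant along the oriented circle $\gamma$ and $s$ reverses $T_q\gamma$, the point $q$ is again an isolated $K$-fixed point with tangent representation $\IR_{\mathrm{sgn}}\oplus V$. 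This defines a fixed-point-free involution on the isolated $K$-fixed points of each type; the two members of a pair can be cancelled by equivariant bordism because their disjoint tubular neighborhoods represent $2\,[B(\IR_{\mathrm{sgn}}\oplus V)]=0$ in $\Omega_3^{SO,D_{2d}}\{\cala D_{2d},\calp D_{2d}\}$ (Proposition \ref{p:23AGPG}). After this step $\overline{M}$ consists only of codimension-$2$ circles and codimension-$0$ components.

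\emph{Eliminating the codimension-$2$ circles.} Here one uses $\Omega_3^{SO,\IZ/c}\{\cala\IZ/c,\calp\IZ/c\}=0$ (Proposition \ref{p:23AGPG}): an equivariant tubular neighborhood of such a circle represents a class in this group, hence bounds equivariantly rel boundary, and substituting such a nullbordism back into $M$ removes the fixed circle without introducing fixed points of codimension $\leq 2$. Performing both removals over the complete $G$-orbits of the relevant submanifolds produces a bordant $D_{2k}$-manifold with only codimension-$0$ or empty fixed points.

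\emph{Main obstacle.} The hard part is the bookkeeping in these two removal steps: carrying out the bordisms $G$-equivariantly and in the correct order (vertices before edges), controlling the interaction of several circles of different stabilizers meeting at a vertex, and tracking orientations in the pairing. This is most cleanly organized by running the Conner-Floyd spectral sequence for a filtration of the family of all subgroups of $D_{2k}$ by adjacent families: the pairing computation shows that the codimension-$3$ contributions to the relevant $E^{1}$-terms are killed, while Proposition \ref{p:23AGPG} shows the codimension-$2$ ones leave no surviving contribution beyond the codimension-$0$ classes, so that $\Omega_3^{SO,D_{2k}}$ is generated by manifolds with only codimension-$0$ or empty fixed points and $\phi_3^{SO}$ is trivial.
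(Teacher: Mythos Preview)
Your proof is correct and follows essentially the same strategy as the paper: pair the isolated dihedral fixed points by walking along the fixed circle of the rotation subgroup, then cancel each pair by attaching an equivariant handle $[0,1]\times B(V)$. The two presentations differ only in packaging. The paper phrases the pairing by passing to the quotient $\overline{M}/D_{2k}$ and following the image of the $\IZ/k$-fixed arc to its other endpoint, which forces a separate case analysis when $k=2$ (the endpoints could coincide in the quotient, and one must follow a third branch). Your version stays upstairs: you observe that $s$ preserves the component $\gamma\subset M^{C}$ through $p$ and acts on it as an orientation-reversing involution, hence has exactly one other fixed point $q\neq p$; this sidesteps the $k=2$ detour entirely. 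You are also more explicit than the paper about the second step---removing the codimension-$2$ circles via $\Omega_3^{SO,\IZ/c}\{\cala\IZ/c,\calp\IZ/c\}=0$ and organizing the whole argument through the Conner--Floyd filtration---whereas the paper simply asserts at the end that one may choose generators without $0$- or $1$-dimensional fixed points. One small point you glossed over: the local constancy of the normal rotation number along $\gamma$ only pins down the $C$-representation at $q$; that this determines the full $D_{2d}$-representation (so that $T_qM\cong T_pM$) uses that the faithful $2$-dimensional real $D_{2d}$-representations are distinguished by their restriction to $C$, which is immediate but worth saying.
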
 

\begin{proof}
Take $M$ a closed oriented $D_{2k}$-equivariant manifold such that $M/D_{2k}$ is connected.
Let us first assume that no element in $D_{2k}$ besides the identity acts trivially (we could always
take the induced action on $M$ of the group $D_{2k}/L$ where $L$ is the subgroup that acts trivially
and consider $M$ as a $D_{2k}/L$-equivariant manifold).
Hence we have that the ramification locus $\overline{M}$ is the union of 1-dimensional and 0-dimensional manifolds.

Whenever the fixed-point set $M^{D_{2k}}$ is non-empty, it will consist of a finite number of isolated points.
We will argue that the number of fixed-points with isomorphic normal representations is even, thus implying that
 the image of the localization map of \eqref{localization K-fied points} at $D_{2k}$-fixed points
\begin{align}
f_{D_{2k}} : \Omega_3^{SO,D_{2k}} \to \Omega_3^{SO,D_{2k}} \{ \cala D_{2k}, \calp D_{2k} \}
\end{align}
 is trivial, and moreover, that the fixed-point set $M^{D_{2k}}$ could be removed 
with an equivariant cobordism by attaching handles around pairs of fixed points with isomorphic normal representation.

 If $x$ belongs to $M^{D_{2k}}$, we claim that there
is another fixed point $x' \in M^{D_{2k}}$, such that both have isomorphic representations of $D_{2k}$ on their
normal neighborhoods. The reason for this is the following. Consider the class $[x] \in \overline{M}/D_{2k}$ on the
quotient of the ramification locus $\overline{M}$. The connected component of the fixed point set of the cyclic subgroup $\IZ/k$ around $x$
 defines a path on the quotient $ \overline{M}/D_{2k}$ with the class $[x]$ at one end. Since $ \overline{M}/D_{2k}$
is compact, the other end of this path ends at the class of the point $[x']$ where we have chosen $x'$ to be on the same
 connected component as $x$ on the fixed point set $M^{\IZ/k}$. The $D_{2k}$ representations around $x$ and $x'$ are isomorphic
because their restrictions to the group $\IZ/k$ give representations with opposite orientations.

Note that whenever $k>2$ the points $x$ and $x'$ are different. When $k=2$ it could be the case that $x=x'$, and if this were the case,
we need to notice that around $[x]$ in  $ \overline{M}/D_{2k}$ we would have a loop (the path we defined above from $x$ to $x'=x$)
and an extra path leaving from it. Following this third path from $x$, we will reach another point $x''$ which will be different from $x$.

We just have shown that the fixed points in $M^{D_{2k}}$ come in pairs with isomorphic representations. If the isomorphic
representation is $V$ and $B(V)$ denotes the unit ball in $V$, this pair of points could removed by the bordism that adds the handle $[0,1] \times B(V)$ on the normal 
neighborhoods of the pair of points.

 The previous construction could be carried out on all the fixed points of the conjugacy classes of subgroups which are of  
dihedral type, and therefore we see that $M$ is equivariantly cobordant to a manifold $M'$ whose fixed-points of its dihedral subgroups are empty.
Hence the ramification locus $\overline{M'}$ is a 1-dimensional manifold and therefore $\phi_3^{SO}([M])=\phi_3^{SO}([M'])=0$.

We could then chose as generators of $\Omega_3^{SO,D_{2k}}$ manifolds without 0-dimension and 1-dimensional fixed points.
\end{proof}

Propositions \ref{p:23AGPG} and  \ref{3-d bordism of dihedral} imply that the fixed-point homomorphism
$\phi_3^{SO}$ is trivial on subgroups isomorphic to cyclic or dihedral groups. 
Nevertheless, the fixed-point homomorpshism may be non-trivial when evaluated
on subgroups isomorphic to the platonic groups $A_4$, $\mathfrak{S}_4$ and $A_5$. 
To understand the image of $\phi_3^{SO}$ for the platonic groups we first need to define 
the {\it blow up} of a representation.

\begin{definition} \label{blow up}
Let $V$ be a finite dimensional real $G$-representation. The {\it blow up} $\gamma(V)$ of $V$ 
is the total space of the bundle of real lines $\IP(V)$ of $V$:
\begin{align}
\gamma(V) := \{ (v,L) \in V \times \IP(V) \colon v \in L \},
\end{align}
endowed with the natural $G$ action: $g \cdot (v,L):=(gv, gL)$. Denote $B(\gamma(V))$ and $S(\gamma(V))$
the unit ball and sphere bundles of $\gamma(V)$ respectively.
\end{definition}
Note that the sphere bundle of $\gamma(V)$ and the sphere of the representation $S(V)$ are canonically isomorphic
\begin{align}
\rho: S(V) \stackrel{\cong}{\to} S(\gamma(V)), \ \ v \mapsto (v,\langle v \rangle ).
\end{align}
Therefore one may glue $B(\overline{V})$, where $\overline{V}$ is $V$ with the opposite orientation, 
to $ B(\gamma(V))$ along their boundary
\begin{align} \label{definition of Y(V)}
Y(V) := B(\overline{V}) \cup_\rho B(\gamma(V)),
\end{align}
thus constructing a closed oriented $G$-manifold. 

What is interesting about the blow up is the fact that for faithful 3-dimensional oriented real representations $V$,
 the blow up $\gamma(V)$
only contains points with cyclic or dihedral isotropy groups. This is a key fact that will be used in what follows.

\begin{proposition} \label{fixed-point homomorphism on platonic}
Let $G$ be a finite subgroup of $SO(3)$. Then the fixed-point homomorphism $\phi_3^{SO}$ is only non-trivial
on subgroups isomorphic to the platonic groups $A_4$, $\mathfrak{S}_4$ and $A_5$. Moreover, its restriction
\begin{align} \label{surjectivity platonic}
\phi_3^{SO} : \Omega_3^{SO,G} \to \bigoplus_{(K) \atop K \ platonic} \Omega_{3}^{SO,K}\{\cala K,\calp K\}^{W_K}
\end{align}
is surjective. 
\end{proposition}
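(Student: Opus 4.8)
The plan is to prove the two assertions of Proposition~\ref{fixed-point homomorphism on platonic} in sequence: first that $\phi_3^{SO}$ vanishes on non-platonic subgroups, and then that the restriction \eqref{surjectivity platonic} is surjective. The vanishing statement is almost immediate from what precedes: by Proposition~\ref{p:23AGPG}, the only finite groups $K$ with $\Omega_3^{SO,K}\{\cala K,\calp K\}\neq 0$ are the dihedral groups $D_{2k}$ and the platonic groups $A_4,\mathfrak{S}_4,A_5$; and Proposition~\ref{3-d bordism of dihedral} shows that the $D_{2k}$-component of $\phi_3^{SO}$ is trivial (its generators have only codimension $0$ fixed points, so their $D_{2k}$-fixed point localization vanishes). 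Since $\Omega_3^{SO,K}\{\cala K,\calp K\}=0$ for cyclic $K$ and all other $K$ by Proposition~\ref{p:23AGPG}, the target of $\phi_3^{SO}$ reduces to the platonic summands and only those can carry nonzero image.

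\textbf{Surjectivity.} The heart of the proof is to realize, for each conjugacy class $(K)$ with $K$ platonic, a generator of $\Omega_3^{SO,K}\{\cala K,\calp K\}^{W_K}\cong\IZ/2$ in the image of $\phi_3^{SO}$. By Proposition~\ref{p:23AGPG}, this target group is generated by $B(V)$, the unit ball of the unique faithful $3$-dimensional irreducible real representation $V$ of $K$ (and the edge/extension map of the Rowlett spectral sequence in \eqref{edge homomorphism} tells us how to induce up from $K$ to $G$, so it suffices to produce a closed oriented $G$-manifold whose $K$-fixed-point data recovers $[B(V)]$, up to the $W_K$-action). The natural candidate is $Y(V)=B(\overline V)\cup_\rho B(\gamma(V))$ from \eqref{definition of Y(V)}: it is a closed oriented $K$-manifold by construction. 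The key geometric input, already flagged in the text, is that for a faithful oriented $3$-dimensional real representation $V$ of a platonic group, the blow-up $\gamma(V)$ contains \emph{only} points with cyclic or dihedral isotropy — so $Y(V)^K$ equals the $K$-fixed set of $B(\overline V)$, namely the single point $\{0\}$, with normal representation $\overline V$. Hence $\phi_3^{SO}([Y(V)])$, in the $K$-summand, is $[B(\overline V)]=[B(V)]$ (these agree in $\IZ/2$), while the contributions of $Y(V)$ at all other platonic conjugacy classes $(K')$ vanish because $Y(V)$ has no points with isotropy $K'$: blow-up points are cyclic/dihedral, $B(\overline V)$ has only the origin with full isotropy $K$, and a platonic $K'$ can appear as an isotropy subgroup of the $K$-manifold $Y(V)$ only if $K'$ is conjugate in $G$ to a subgroup of $K$ that is itself platonic, which forces $K'\sim K$ (platonic groups are maximal among platonic subgroups of platonic groups, up to the three cases, which one checks directly: $A_4\subset\mathfrak{S}_4$ and $A_4\subset A_5$, etc.). This last point needs care — see below.

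\textbf{Assembling the global map.} To get surjectivity of \eqref{surjectivity platonic} onto the \emph{whole} direct sum, I would induce each $Y(V_K)$ from $K$ up to $G$ via $M\mapsto M\times_K G$ as in \eqref{edge homomorphism}, and then control the resulting matrix of $\phi_3^{SO}$-components over all platonic conjugacy classes $(K)$ of $G$. Ordering the conjugacy classes by a refinement of the subconjugacy partial order (largest first), the argument above shows the matrix is block upper-triangular with the identity (of $\IZ/2$) on the diagonal: the $(K)$-generator $[Y(V_K)\times_K G]$ maps to the generator in the $(K)$-summand and to zero in every summand $(K')$ with $K'$ not subconjugate to $K$ — and the diagonal entries being units in $\IZ/2$ makes the whole map surjective regardless of the strictly-lower entries. (One must also verify $Y(V_K)^{W_K}$ represents a $W_K$-invariant class so that it lands in the invariant submodule; since $\Omega_3^{SO,K}\{\cala K,\calp K\}\cong\IZ/2$ this is automatic.)

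\textbf{Main obstacle.} The delicate step is the claim that $\gamma(V)$ has only cyclic and dihedral isotropy for faithful oriented $3$-dimensional real representations $V$ of the platonic groups — equivalently, that no platonic subgroup fixes a line in $V$ except at the origin, and that the stabilizer of a point $(v,L)\in\gamma(V)$ with $v\neq 0$, which is the common stabilizer of $v$ and $L$, is never platonic. For $v\neq 0$ this is the stabilizer of $v$ in $K$, a proper subgroup of a platonic group, hence cyclic or dihedral; the subtlety is the exceptional fibre over $v=0$, i.e.\ the $K$-action on $\IP(V)$ itself — one must check that $\Stab_K(L)$ for $L\in\IP(V)$ is never all of $K$ (true since $V$ is faithful and irreducible of dimension $>1$, so $K$ fixes no line) and is never platonic (the point stabilizers in $\IP(V)\cong\IRP^2$ for the standard rotation actions of $A_4,\mathfrak{S}_4,A_5$ are the point-groups $\IZ/n$ and $D_{2n}$ — this is the classical geometry of the platonic solids and can be cited or verified case-by-case). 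Once this structural fact is in place, the bookkeeping of isotropy types in $Y(V_K)\times_K G$ and the triangularity of the $\phi_3^{SO}$-matrix are routine, and surjectivity follows.
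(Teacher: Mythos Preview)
Your approach is essentially the paper's: construct $Y(V_K)$, induce to $G$, show the $(K)$-component of $\phi_3^{SO}$ hits the generator, and deduce surjectivity from triangularity of the resulting matrix with respect to a subconjugacy ordering. The paper's version is just more economical. Two points worth cleaning up:

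\emph{First}, your initial claim that the contributions of $Y(V_K)$ at all other platonic classes $(K')$ vanish is false, and the paper's own example shows it: for $G=A_5$ the class $[Y(V_{A_5})]$ has image $(1,1)$ in the $(A_5)\oplus(A_4)$ summands, not $(1,0)$. You do retreat to the correct triangularity argument (the $(K')$-component vanishes whenever $K'$ is not subconjugate to $K$, simply because every isotropy group of $G\times_K Y(V_K)$ is conjugate to a subgroup of $K$), and that is all that is needed; but the intermediate sentence should be deleted.

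\emph{Second}, your ``main obstacle'' is not one. For the proof you only need $\gamma(V_K)^K=\emptyset$, which is immediate: a $K$-fixed point of $\gamma(V_K)$ would give a $K$-invariant line in the irreducible $3$-dimensional representation $V_K$. The finer statement that all isotropies in $\gamma(V_K)$ are cyclic or dihedral is true (stabilizers of lines in $\IR^3$ under a subgroup of $SO(3)$ lie in the copy of $O(2)$ normalizing that line, hence are cyclic or dihedral), and the paper uses it later in Lemma~\ref{Lemma blow up}, but it plays no role here. Incidentally, your justification ``a proper subgroup of a platonic group, hence cyclic or dihedral'' is wrong as stated---$A_4$ is a proper subgroup of both $\mathfrak{S}_4$ and $A_5$---though the conclusion for stabilizers of nonzero vectors is correct for the reason just given (they lie in an $SO(2)$).
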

\begin{proof}
Let $(K)$ be a conjugacy class of subgroups of $G$ with $K$ isomorphic to any of the platonic groups
$A_4$, $\mathfrak{S}_4$ or $A_5$. Denote by $V_K$ the 3-dimensional real representation induced by the symmetries
of the respective platonic solid. Note that $V_K$ is isomorphic to the representation with the reverse orientation $\overline{V_K}$,
and therefore the closed oriented $K$-manifold $Y(V_K)$ defined in \eqref{definition of Y(V)} is diffeomorphic to 
$B(V_K) \cup_\rho B(\gamma(V_K))$. Note furthermore that $\Omega_{3}^{K}\{\cala K,\calp K\} \cong \IZ/2$ 
since $V_K$ is the only irreducible representation of dimension 3.

The localization map at $K$-fixed points of \eqref{localization K-fied points}
\begin{align}
f_K: \Omega_{3}^{SO,K} \to \Omega_{3}^{K}\{\cala K,\calp K\} \cong \IZ/2
\end{align}
maps $Y(V_K)$ to the normal bundle of its $K$-fixed points $Y(V_K)^K$. Since the blow up $\gamma(V_K)$
has no $K$-fixed points, then $Y(V_K)^K=B(V_K)^K$ and the fixed point set consists of only one point. Hence 
$f_K(Y(V_K)) = B(V_K)$  with $[B(V_K)]$ the generator of the group $\Omega_{3}^{SO, K}\{\cala K,\calp K\}$.

The commutativity of the diagram
\begin{align}
\xymatrix{
\Omega_3^{SO,K} \ar[rr]^{i_K^G} \ar[d]_{f_K} && \Omega_3^{SO,G} \ar[d]^{f_K \circ r^G_K} \\
\Omega_{3}^{SO, K}\{\cala K,\calp K\} \ar[rr]_{i_K^{N_GK}} && \Omega_{3}^{SO, K}\{\cala K,\calp K\}^{W_K},
}
\end{align}
where $i_H^L: \Omega_*^H \to \Omega_*^L$, $[M] \mapsto [L \times_H M]$ is the induction map
for the inclusion of groups $H \subset L$, implies that the manifold $f_K \circ r^G_K \left( G \times_K Y(V_K) \right)$ generates the group
$\Omega_{3}^{SO,K}\{\cala K,\calp K\}^{W_K}$.

Note that whenever  $K \subsetneqq K'$, we have $\left(G \times_K Y(V_K) \right)^{K'} = \emptyset$. Therefore we conclude that the images
under $\phi_3^{SO}$ of the $G$-manifolds $G \times_K Y(V_K)$, where $(K)$ runs over the conjugacy classes of platonic
subgroups of $G$, provides the desired surjectivity.
\end{proof}

Let us see the previous result in an example. Let $G=A_5$ and take the $A_5$-manifolds $Y(V_{A_5})$ and $A_5 \times_{A_4} Y(V_{A_4})$
in $\Omega_3^{SO,A_5}$. The image under $\phi_3^{SO}$ of these two manifolds in
\begin{align}
\Omega_{3}^{SO, A_5}\{\cala A_5,\calp A_5\} \oplus \Omega_{3}^{SO, A_4}\{\cala A_4,\calp A_4\} \cong \IZ/2 \oplus \IZ/2
\end{align}
is respectively $(1,1)$ and $(0,1)$. The surjectivity of the map \eqref{surjectivity platonic} follows.

\subsection{Surfaces without isolated fixed points for any subgroup}
Let $\calf$ be a family of subgroups in $G$ and let us  then denote by $\overline{\Omega}_2^{G}\{ \calf\}$ the subgroup of ${\Omega}_2^{G}\{ \calf\}$ generated
by manifolds without isolated $K$-fixed points for all $K \in \calf$, and whose underlying first Chern number is zero in the unitary case. Since Corollary \ref{c:nofixed} also implies that 
\begin{equation}
\overline{\Omega}_2^{G} \{ \calf\} = \Ker(\phi_2|_{{\Omega}_2^{G}\{ \calf\}}) = \Tor_\IZ (\Omega^{G}_2\{ \calf\}),
\end{equation}
we may study the properties of $\overline{\Omega}_2^{G}$ restricted to families.

\begin{lemma}
Let $\{\calf, \calf'\}$ be an adjacent pair of families differing by the conjugacy class $(K)$ of the subgroup $K \subset G$. Then,
the canonical map of bordism groups for families $ \overline{\Omega}_2^{G}\{ \calf'\} \to \overline{\Omega}_2^{G}\{\calf\}$ fits into the split exact sequence
\begin{equation}
 \overline{\Omega}_2^{G}\{ \calf'\} \to \overline{\Omega}_2^{G}\{\calf\} \to \widetilde{\Omega}_2(BW_K) \to 0,
\end{equation} 
with 
$\widetilde{\Omega}_2$ the reduced bordism groups.
\label{l:adjacentnofixed}
\end{lemma}

\begin{proof}
A generator in $\overline{\Omega}_2^{G}\{\calf\}$ not in the image of $\overline{\Omega}_2^{G}\{ \calf'\} $ is represented by a $G$-connected manifold $M$ such that the fixed point set $M^K$ is a closed non-empty surface without boundary, and such that
 there is a $G$-equivariant homomorphism $G \times_{N_GK}M^K \stackrel{\cong}{\to} M$, $[(g,m)]\mapsto gm$. 
The closed surface $M^K$ is endowed with a free action of the group $W_K$, thus producing a unique
 map up to homotopy
$M^K/W_K \to BW_K$. The induction map 
\begin{equation}
\widetilde{\Omega}_2(BW_K) \to \overline{\Omega}_2^{G}\{\calf\}, \ \ \ L \mapsto G \times_{N_GK}L
\end{equation}
produces the desired section.

For the unitary case we need only to see that the first Chern number of $M$ is zero, if and only if
the first Chern number of $M^K$ is zero, if and only if the first Chern number of $M^K/W_K$ is also zero.

Here we have used the isomorphism
\begin{equation}
 \widetilde{\Omega}_2^{U}(BW_K) \cong \Ker \left({\Omega}_2^{U}(BW_K) \to {\Omega}_2^{U}\right)
\end{equation}
where the forgetful map ${\Omega}_2^{U}(BW_K) \to {\Omega}_2^{U}$ simply takes a framed bordism $[\Sigma \to BW_K]$ and
maps it to $[\Sigma]$. The kernel consists of framed surfaces whose underlying first Chern number is zero.
In the oriented case we have $\widetilde{\Omega}_2^{SO}(BW_K) = {\Omega}_2^{SO}(BW_K)$.

\end{proof}

\section{Bounding equivariant surfaces}
\label{s:bounding}
In this section we present the main result of this work, which is the calculation  of the groups $\overline{\Omega}_2^G$.
 To address this question we bring the Conner-Floyd spectral sequence of section \ref{ss:CFSS} associated to the families of subgroups 
\begin{equation} \label{dense filtration}
\{1\}= \calf_0 \subset \calf_1 \subset \cdots \subset \calf_l=\cala G
\end{equation}
where all the pairs are adjacent, i.e. $ \calf_{j} - \calf_{j-1} = (K_j)$ for some conjugacy class of subgroups $(K_j)$,
and such that the conjugacy classes $(K_j)$ span all conjugacy classes of subgroups of $G$ (hence $l+1$ is the number
of conjugacy classes of subgroups of $G$).

We may filter the group $\overline{\Omega}_2^G$ by the subgroups
\begin{equation}
F_p \overline{\Omega}_2^G := \Image( \overline{\Omega}_2^G \{\calf_p\} \to \overline{\Omega}_2^G)
\end{equation}
whose associated graded groups are the quotients
\begin{equation}
\Gr_p \overline{\Omega}_2^G  =  F_p \overline{\Omega}_2^G / F_{p-1} \overline{\Omega}_2^G.
\end{equation}

The commutative diagram with exact rows
\begin{align}
\xymatrix@R=10pt{{\Omega}_3^G\{\cala G,\calf_{p-1}\} \ar[r] \ar[d] & \overline{\Omega}_2^G\{\calf_{p-1}\}
\ar[r] \ar[d] & \overline{\Omega}_2^G \ar@2{-} [d]\\
{\Omega}_3^G\{\cala G,\calf_p\} \ar[r] & \overline{\Omega}_2^G\{\calf_p\} \ar[r] & \overline{\Omega}_2^G,
}
\end{align}
 together with the result of Lemma \ref{l:adjacentnofixed}, implies that the following sequence is exact
\begin{equation}
 {\Omega}_3^G\{\cala G,\calf_p\} \stackrel{\partial}{\to} \widetilde{\Omega}_2(BW_{K_p})  \to \Gr_p \overline{\Omega}_2^G \to 0.
\end{equation}

We therefore need to understand the image of the boundary map
\begin{equation} \label{boundary map adjacent families}
 {\Omega}_3^G\{\cala G,\calf_p\} \stackrel{\partial}{\to} \widetilde{\Omega}_2(BW_{K_p})
\end{equation} in order to determine
the groups $\Gr_p \overline{\Omega}_2^G$.

Note that the image of the boundary map of \eqref{boundary map adjacent families} is equivalent 
to the image of the boundary map
\begin{equation} \label{boundary map W_K}
 {\Omega}_3^{W_{K_p}}\{\cala W_{K_p},\{1\}\} \stackrel{\partial}{\to} \widetilde{\Omega}_2(BW_{K_p}).
\end{equation}
This follows from the fact that the manifolds of interest will have trivial actions of the groups
in the conjugacy class $(K_p)$ and then one follows the same argument
as presented in  Lemma \ref{l:adjacentnofixed}. Therefore we obtain the following result.

\begin{lemma}
\label{l:gr}
Consider the associated graded groups $\Gr_* \overline{\Omega}_2^G$ of $\overline{\Omega}_2^G$ induced by the families of subgroups presented in \eqref{dense filtration}. Then 
\begin{equation}
\Gr_p \overline{\Omega}_2^G \cong \Coker\left({\Omega}_3^{W_{K_p}}\{\cala W_{K_p},\{1\}\} \stackrel{\partial}{\to} \widetilde{\Omega}_2(BW_{K_p})\right).
\end{equation}
\end{lemma}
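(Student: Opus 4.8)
The plan is to assemble the ingredients already in place: the Conner--Floyd filtration of $\overline{\Omega}_2^G$ by the families in \eqref{dense filtration}, the identification in Lemma \ref{l:adjacentnofixed} of the adjacent-family quotient $\overline{\Omega}_2^{G}\{\calf_p,\calf_{p-1}\}$ with $\widetilde{\Omega}_2(BW_{K_p})$ (or $\Omega_2^{SO}(BW_{K_p})$, which is the same group since every closed oriented surface bounds), and the exact sequence coming from the long exact sequence for the triple $\calf_{p-1}\subset\calf_p\subset\cala G$. The first step is to write down that piece of the Conner--Floyd exact sequence,
\begin{equation*}
 \overline{\Omega}_3^G\{\cala G,\calf_p\} \stackrel{\partial}{\to} \overline{\Omega}_2^G\{\calf_p,\calf_{p-1}\} \to \Gr_p \overline{\Omega}_2^G \to 0,
\end{equation*}
where exactness on the right (surjectivity onto the associated graded piece) is exactly what Lemma \ref{l:adjacentnofixed} gives via the splitting. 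Hence $\Gr_p\overline{\Omega}_2^G$ is the cokernel of $\partial$, and it remains only to identify the source and the map $\partial$ with the corresponding data for the Weyl group $W_{K_p}$.

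The second step is to reduce from $G$ to $W_{K_p}$. Here I would argue exactly as in the proof of Lemma \ref{l:adjacentnofixed}: a class in $\overline{\Omega}_2^G\{\calf_p,\calf_{p-1}\}$ is represented by a surface $M$ whose isotropy on the top stratum is (conjugate to) $K_p$ acting trivially on the normal directions, so $M\cong G\times_{N_GK_p}M^{K_p}$ with $M^{K_p}$ a closed free $W_{K_p}$-surface, and similarly any $3$-manifold in $\overline{\Omega}_3^G\{\cala G,\calf_p\}$ bounding such an $M$ can be taken, after the same tubular-neighbourhood reduction, to have trivial $K_p$-action on the relevant stratum, so it is induced up from a $W_{K_p}$-manifold in $\Omega_3^{W_{K_p}}\{\cala W_{K_p},\{1\}\}$. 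This matches the source and target of the boundary map \eqref{boundary map W_K} with those of \eqref{boundary map adjacent families}, and by naturality of the connecting homomorphism in the long exact sequence for families (and the compatibility of the induction isomorphisms $G\times_{N_GK_p}(-)$ with $\partial$) the two boundary maps have the same image. Combining with the first step,
\begin{equation*}
\Gr_p \overline{\Omega}_2^G \cong \Coker\left(\partial\colon{\Omega}_3^G\{\cala G,\calf_p\} \to \overline{\Omega}_2^G\{\calf_p,\calf_{p-1}\}\right)\cong\Coker\left({\Omega}_3^{W_{K_p}}\{\cala W_{K_p},\{1\}\} \stackrel{\partial}{\to} \widetilde{\Omega}_2(BW_{K_p})\right),
\end{equation*}
which is the claim.

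The main obstacle I anticipate is the reduction-to-$W_{K_p}$ step for the \emph{three}-dimensional bordism group: one must be careful that a $3$-dimensional $G$-manifold witnessing that $M\in\overline{\Omega}_2^G\{\calf_p,\calf_{p-1}\}$ bounds can genuinely be replaced, without changing the image of $\partial$, by one induced from a free-outside-$\{1\}$ $W_{K_p}$-manifold; this uses the tubular-neighbourhood cobordism $[M]=[N(M^{K_p})]$ in $\Omega_*^G\{\cala G,\calf_p\}$ together with the fact that the normal data of the top stratum is trivial in the relevant sense (the surfaces in $\overline{\Omega}_2^G$ have no low-codimension fixed points by Corollary \ref{c:nofixed}), exactly as spelled out in the paragraph preceding the lemma. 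Everything else is bookkeeping with the long exact sequence and the splitting already established. A cosmetic remark: one should note the slight notational slip between $\overline{\Omega}_3^G$ and ${\Omega}_3^G$ in the displayed sequences — only the $\overline{\phantom{x}}$ (no isolated fixed points, Euler characteristic zero) part of $\Omega_3^G\{\cala G,\calf_p\}$ can hit $\overline{\Omega}_2^G\{\calf_p,\calf_{p-1}\}$ nontrivially, so the cokernel is unchanged whichever one writes.
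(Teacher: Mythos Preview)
Your proposal is correct and follows essentially the same approach as the paper, which does not give a separate formal proof but rather derives the lemma from the discussion immediately preceding it: the Conner--Floyd exact sequence together with the surjectivity from Lemma~\ref{l:adjacentnofixed}, and then the reduction of the boundary map from $G$ to $W_{K_p}$ by noting that the relevant manifolds carry a trivial $K_p$-action and invoking the same argument as in Lemma~\ref{l:adjacentnofixed}. Your more careful discussion of the three-dimensional reduction step and the remark about $\overline{\Omega}_3^G$ versus $\Omega_3^G$ simply make explicit what the paper leaves terse.
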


Hence we need to understand which surfaces with free actions equivariantly bound.

\subsection{Bounding free actions on surfaces}
\label{ss:freebounding}

It turns out that the only free actions on surfaces that equivariantly bound are those on which the quotient surface is a torus.
This result is originally due to the second author \cite{Samperton-schur, Samperton-free} whenever
the group $G$ does not contain any subgroup isomorphic to the platonic groups $A_4, \mathfrak{S}_4,A_5$ or to the dihedral groups $D_{2k}$,
 and it was the one who triggered the investigation presented in this work.
Here we will produce an alternative proof, generalizing it for all finite groups. Let us first recall the definition of the Bogomolov multiplier of a finite group.

The cohomology group $H^2(G, \IC^*)$ determines the isomorphism classes of central $\IC^*$ group extensions of $G$, and therefore
complex irreducible projective representations of the group $G$ define elements in $H^2(G, \IC^*)$. Schur \cite{Schur} extensively studied
this cohomology group and therefore it was called the {\it Schur multiplier} of $G$ \cite{Karpilovsky}.

Bogomolov \cite{Bogomolov} defined the subgroup $B_0(G)$ of the Schur multiplier consisting of all elements which vanish
when restricted to all its abelian subgroups
\begin{equation} \label{definition Bogomolov}
B_0(G) = \bigcap_{A \subset G, \mathrm{abelian}} \Ker\left( \mathrm{res}^G_A: H^2(G, \IC^*) \to H^2(A, \IC^*) \right).
\end{equation}
The interest in this group comes among other things from a result Bogomolov proved in \cite[Thm. 3.1]{Bogomolov} which states that whenever the field of $G$-invariants $\IC[G]^G$ of the rational
field $\IC[G]$ is rational over $\IC$ then the Bogomolov multiplier of the group $G$ vanishes.

Using the fact that for finite groups $H^2(G, \IC^*) \cong H_2(G, \IZ)$, Moravec \cite{Moravec} showed that the 
Bogomolov multiplier group $B_0(G)$ is isomorphic to the group 
\begin{equation} \label{definition Bogomolov homology}
\tilde{B}_0(G) := H_2(G, \IZ) / M_0(G) 
\end{equation}
where $M_0(G)$ is the subgroup of $H_2(G, \IZ)$ generated by the images 
\begin{equation}
\Image \left(H_2(\IZ \times \IZ, \IZ) \to H_2(G, \IZ)\right) 
\end{equation}
of all homomorphisms $\IZ\times\IZ \to G$. This homology version of the Bogomolov multiplier was then used to calculate
$B_0(G)$ for several types of finite groups \cite{Moravec}.

In this homological form, the Bogomolov multiplier appeared much earlier in \cite{Neumann} in connection with $SK$-groups (cutting and pasting of manifolds) and in \cite{Oliver1980} as $SK_1$ in algebraic K-theory.

Using now the fact that there are canonical isomorphisms
\begin{equation}
\widetilde{\Omega}_2^U(BG) \stackrel{\cong}{\to} {\Omega}_2^{SO}(BG)  \stackrel{\cong}{\to} {H}_2(BG, \IZ), 
\end{equation}

we present a generalization of a result which was established by the second author in \cite{Samperton-free}. First we need a lemma.

\begin{lemma} \label{Lemma blow up}
Let $\Sigma$ be an oriented surface with free $G$-action that bounds equivariantly. Then $\Sigma$ can be extended
to an oriented $G$-manifold whose ramification locus  is a 1-dimensional manifold (all the isotropy groups are all cyclic).
\end{lemma}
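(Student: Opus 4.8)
The plan is to start with an equivariant nullbordism $W$ of $\Sigma$ and surgically replace the strata with non-cyclic isotropy by strata with only cyclic isotropy, using the blow-up construction of Definition \ref{blow up} applied fiberwise along these strata. First I would fix a $G$-manifold $W$ with $\partial W = \Sigma$ on which $G$ acts with $\Sigma$ free. Since $\Sigma$ is free, the fixed-point set $W^K$ for any $K\neq\{1\}$ is disjoint from the boundary, so $\overline{W}$ is a closed subcomplex of the interior. Organize the non-cyclic isotropy subgroups by reverse inclusion and work inductively on the poset, handling at each stage a conjugacy class $(K)$ with $K$ non-cyclic such that all isotropy groups strictly containing $K$ are cyclic (or have already been dealt with). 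The stratum $W^K$ (pushed off by a tubular neighborhood) carries a free $W_K$-action and a $G$-equivariant normal bundle, which — because $K$ must act with no nonzero fixed vectors on the fibers — is classified by a faithful $K$-representation $V$ (varying over components). Crucially, for faithful $K$-representations $V$ the blow-up $\gamma(V)$ contains only points with cyclic or dihedral isotropy; and the dihedral strata that appear can in turn be blown up (a faithful representation of $D_{2k}$ of the relevant dimension, restricted to the cyclic rotation subgroup, again has no fixed vectors, so its projectivization has only cyclic isotropy), so that after finitely many iterated blow-ups one is left only with cyclic isotropy.

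Next I would perform the actual replacement. Over a $W_K$-invariant tubular neighborhood $N \cong$ (the associated $V$-bundle, induced up from $N_GK$ to $G$) of the stratum, excise the interior $B(V)$-bundle and glue in the corresponding iterated-blow-up $B(\gamma(\cdots\gamma(V)))$-bundle along the common sphere-bundle boundary, using the canonical identification $\rho\colon S(V)\xrightarrow{\cong} S(\gamma(V))$ of the definition (this is exactly the local model $Y(V)$ of \eqref{definition of Y(V)}, done in families over $W^K/W_K$ and then induced up to $G$). This does not change the boundary $\Sigma$, it removes the $K$-fixed stratum (since $\gamma(V)$ has no $K$-fixed points), and it introduces only cyclic/dihedral strata of isotropy groups that are either new smaller cyclic groups or cyclic/dihedral subgroups already scheduled to be treated; orientations match because $V \cong \overline{V}$ is not needed here — one simply uses $B(\overline{V})$ on the outside as in \eqref{definition of Y(V)} — so the result is again a well-defined oriented $G$-manifold. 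Iterating over all conjugacy classes of non-cyclic (and then dihedral) subgroups, which is a finite process, yields an oriented $G$-manifold $W'$ with $\partial W' = \Sigma$ whose ramification locus $\overline{W'}$ involves only cyclic isotropy groups, hence is a $1$-manifold (fixed-point sets of cyclic groups acting on a $3$-manifold with complex normal bundle have codimension $2$).

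The main obstacle I expect is bookkeeping the induction so that blowing up one stratum does not reintroduce, or fail to terminate on, non-cyclic strata: one must check that the isotropy groups appearing in $\gamma(V)$ for a non-cyclic $K$ are genuinely "smaller" in the poset (cyclic, or dihedral which is then handled), that the dihedral blow-ups themselves terminate, and that blowing up a stratum does not create new higher non-cyclic isotropy along the exceptional locus — this is precisely the content of the "key fact" highlighted before Proposition \ref{fixed-point homomorphism on platonic}, namely that for faithful $3$-dimensional oriented representations the blow-up only has cyclic or dihedral isotropy, and its analogue in lower dimensions is elementary. A secondary point requiring care is that the construction must be carried out $W_K$-equivariantly in families over the (possibly disconnected, non-simply-connected) quotient $W^K/W_K$ and then induced up from $N_GK$ to $G$, and that orientations and the product collar structure near $\Sigma$ are preserved throughout so that the boundary is literally the original $\Sigma$ with its given free $G$-action.
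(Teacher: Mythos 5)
Your overall strategy (take an equivariant nullbordism and use the blow-up of Definition \ref{blow up} to kill the non-cyclic isotropy) matches the paper's strategy for the platonic groups $A_4$, $\mathfrak{S}_4$, $A_5$, and that part of your argument works: their faithful oriented $3$-dimensional representations have $0$-dimensional fixed sets, so the offending points are isolated, and every line-stabilizer in one of these groups is a \emph{proper} (cyclic or dihedral) subgroup; hence excising $G\times_{G_x}B(V)$ and gluing in $G\times_{G_x}B(\gamma(V))$ strictly decreases the isotropy there without changing the boundary.

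The gap is in your treatment of the dihedral points by ``iterated blow-ups''. A faithful oriented $3$-dimensional representation of $D_{2k}$ has the form $V=W\oplus \det$, where $W$ is a faithful $2$-dimensional representation and $\det$ is the axis on which the rotation subgroup acts trivially and the reflections act by $-1$. So your parenthetical justification --- that $V$ restricted to the cyclic rotation subgroup has no nonzero fixed vectors --- is false: the rotation subgroup fixes the axis pointwise. Worse, the line $L_0=[\det]\in\IP(V)$ is carried to itself by \emph{every} element of $D_{2k}$, so $(0,L_0)\in\gamma(V)$ is a point whose isotropy is the full group $D_{2k}$. Blowing up an isolated $D_{2k}$-fixed point therefore produces another isolated $D_{2k}$-fixed point, there is no strict decrease in your poset, and the induction never terminates. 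This is precisely why the paper does \emph{not} blow up the dihedral points: instead it shows (as in Proposition \ref{3-d bordism of dihedral}) that the $D_{2k}$-fixed points come in pairs $x,x'$ with isomorphic normal representations --- found by following the arc of fixed points of the maximal cyclic subgroup inside the quotient of the ramification locus from one endpoint to the other, with a separate argument when $x=x'$ for $K\cong D_4$ --- and cancels each pair by attaching an equivariant handle $[0,1]\times(G\times_K B(V))$, a cobordism that leaves the boundary $\Sigma$ untouched. Some such mechanism, genuinely different from the blow-up, is needed for the dihedral isotropy, and without it your proof is incomplete.
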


\begin{proof}
Let $M$ be an oriented $G$-manifold whose boundary is the surface with free $G$-action $\Sigma$.
Take a point $x$ in the ramification locus $\overline{M}$ and denote $G_x$ its isotropy group. Since
the $G$-action is free on the boundary, the action of $G_x$ on the normal neighborhood of $x$
must induce an injective homomorphism $G_x \to SO(3)$. Hence $G_x$ must be isomorphic
to a cyclic group, a dihedral group or any of the platonic groups $A_4$, $\mathfrak{S}_4$ or $A_5$.
Whenever $G_x$ is cyclic, $x$ is a smooth point in the ramification locus $\overline{M}$, because
locally $G_x$ acts by rotations. Whenever $G_x$ is neither trivial nor cyclic, then $x$ is a singular point on the ramification locus. Simply note that the irreducible and oriented 3-dimensional representations
of the dihedral and the platonic groups have the origin as a singular point.
Therefore the obstruction for the ramification locus $\overline{M}$ to be a 1-dimensional manifold is the presence
of points whose isotropy groups are isomorphic to the dihedral or the platonic groups ($A_4$, $\mathfrak{S}_4$, $A_5$).  Our goal is to modify $M$ to build a new manifold without any such isotropies.

We briefly outline the overall strategy of our de-singularizing process.  There are three steps.
\begin{enumerate}
\item Perform the blow up construction on the normal neighborhoods of the points whose isotropies are isomorphic to either
$A_5$, $\mathfrak{S}_4$ or $A_4$; this produces a new manifold $M'$ with the same boundary as $M$ and no points with $A_5$, $\mathfrak{S}_4$ or $A_4$ isotropy.
\item  In $M'$, ``cancel" as many pairs of distinct orbits of a given dihedral isotropy type possible; our cancellation method results in a manifold $M''$ that is equivariantly cobordant to $M'$, relative to the boundary $\Sigma$.  By cancelling as many pairs as possible, we guarantee that for the action of $G$ on $M''$, a given conjugacy class of dihedral subgroup of $G$ occurs on at most one orbit in $M''$.
\item The final step is the hardest.  If $x$ is a point in $M''$ with dihedral isotropy $G_x \le G$ that is maximal, then we show that the action of $G$ on $G\cdot x$ possesses an ``involutive" element $g$ such that $y = gx \ne x$, $g^2 x =x$, $G_{y} = G_x$, and $g$ commutes with a preferred rotation $\sigma \in G_x$.  We then classify the possibilities for $\langle G_x, g\rangle$, and build an appropriate equivariant handle that desingularizes the orbit $G\cdot x$.  Inductively applying this construction to all dihedrally stabilized points, we arrive at our desired manifold $M'''$.  In fact, this is oversimplifying, and we must return to step (2) once at some point in this process, but the basic idea is as described.
\end{enumerate}

Let us expand on step (1).  Take a point $x \in M$ whose isotropy $G_x$ is isomorphic to $A_5$ (we will start with the larger isotropy first).  Let $N_x$ be a normal $G_x$-neighborhood of $x$ such that $ N_x \cap  g \cdot N_x = \emptyset$ for 
all $g \in G - G_x$, and let 
\begin{align}
\sigma: B(V_{G_x}) \stackrel{\cong}{\to} N_x
\end{align}
 be a $G_x$-equivariant diffeomorphism
with $V_{G_x}$ the faithful representation of $G_x$ around $x$.
Take $G\cdot N_x$ as a $G$-equivariant neighborhood around the orbit $G \cdot x$ and note that $\sigma$ induces
a $G$-equivariant diffeomorphism $G \times_{G_x} B(V_{G_x}) \stackrel{\cong}{\to} G \cdot N_x$.
Construct the blow up $B(\gamma(V_{G_x}))$ presented in Definition \ref{blow up} and note that the sphere
bundles are $G_x$-diffeomorphic to the boundary of $N_x$:
\begin{align}
S(\gamma(V_{G_x})) \cong S(V_{G_x}) \stackrel{\cong}{\to} \partial N_x.
\end{align}
Cut $G\cdot N_x$ from $M$ and glue $G \times_{G_x} B(\gamma(V_{G_x}))$ along the boundary
using the diffeomorphism $\sigma$. Define the new $G$-manifold
\begin{align}
M' :=\left( M - G\cdot N_x \right) \cup_{\partial( G \cdot N_x)} G \times_{G_x} B(\gamma(V_{G_x}))
\end{align}
and note that $M'$ has the same boundary as $M$, but with the property that inside $\partial N_x$
there are no more points with isotropy isomorphic to $A_5$.
Cutting and pasting the blow ups for every point with isotropy isomorphic to $A_5$, produces a 
manifold without points whose isotropy is isomorphic to $A_5$. Then a similar blow up procedure is carried out for 
points with isotropy isomorphic to $\mathfrak{S}_4$ and then to points with isotropy isomorphic to $A_4$.
The resulting manifold $M'$ has the same boundary as $M$, but it does not contain points with isotropy
isomorphic to either $A_5$, $\mathfrak{S}_4$ or $A_4$. The only isotropies that appear on $M'$ are cyclic
or dihedral groups.  This concludes step (1).  (We note for the interested reader that $M$ and $M'$ are not necessarily relatively cobordant, even though they do have the same boundary.)

For step (2), suppose $x$ and $y$ are two points in $M'$ with equal dihedral stabilizers $G_x = G_y$  such that $x$ and $y$ are not in the same $G$ orbit, but the representation of $G_x$ on a regular neighborhood of $x$ is equivalent to the representation of $G_y=G_x$ on a regular neighborhood of $y$. Call this representation $V$. Choose local charts around $x$ and $y$ such that the angle of rotations of the elements in $G_x$ agree
in both charts to the angles of rotations
 in $V$.
  Now simply attach an equivariant 4-dimensional handle of the form $[0,1] \times (G \times_{G_x} B(V))$ to the equivariant neighborhood of $\{x,y\}$. Note that the only points in this handle with non-cyclic isotropy are those in $G\cdot\{x,y\}$. 
Thus, the cobordant 3-manifold (where the open $G$-equivariant regular neighborhood of $G\cdot \{x,y\}$ is replaced with the vertical boundary of our handle) has fewer points with isotropy isomorphic to dihedral groups. 
Iterate this procedure, attaching such an equivariant handle anytime we see a pair $x$ and $y$  with the same isotropy group $G_x = G_y$,  isomorphic local representations, 
and $y \notin G\cdot x$, until there are no more such pairs.  
Call the resulting manifold $M''$.  Of course, since these handle attachments occur away from $\partial M' = \Sigma,$ $M''$ still has boundary $\Sigma$.  More precisely, $M'$ and $M''$ are equivariantly bordant relative to $\Sigma$.

Step (3) is the most involved.  Let $x$ be a point in $M''$ with dihedral isotropy $G_x \cong D_{2k}$.
Define the set $\Lambda_x$ of points $y$ in $M''$ such that $G_y=G_x$ and whose local representations on regular neighborhoods around $y$ and $x$ respectively are isomorphic.
  By construction, if $y$ is any other point in this set, then $y \in G\cdot x$. In fact, $\Lambda_x=  N_G(G_x) \cdot x$ and $\Lambda_x$ is bijective with the group $N_G(G_x)/G_x$.
    Our goal now is to build a $G$-handle that allows us to cancel the singularities in the \emph{single} orbit $G\cdot x$ with one another  in pairs in a $G$-equivariant fashion.  In particular, we will need to show that $|G \cdot x|$ is even and admits a $G$-invariant matching.

Let us first show that there is an element $g \in N_G(G_x)$ such that its
projection on $N_G(G_x)/G_x$ has order two. This $g$ will allow us to define $y:=gx$ such that $x=gy$.  We proceed by induction down the subgroup lattice of $G$ and through the different isomorphism classes
of faithful local representations of these point stabilizers. Let $x$ be any point in $M''$ whose stabilizer $G_x\cong D_{2k}$ is maximal among all dihedral point stabilizers in $M''$ (with respect to subgroup inclusion) and consider the restricted action of just $G_x$ on $M''$. 

If $k>2$, let $\sigma \in G_x$ be an element of order $k$ and take its fixed point set $(M'')^\sigma$.  Note that this fixed point set is a disjoint union of embedded circles. The group $\mathbb{Z}_2 \cong G_x/\langle \sigma \rangle$ acts on  $(M'')^\sigma$ 
and the set of fixed points is precisely $\Lambda_x$. The Euler characteristic of 
$(M'')^\sigma$ being zero implies that $\Lambda_x$ has an even number of points.
Since $|\Lambda_x|=|N_{G_x}(G_x)/G_x|$, then the group $N_{G_x}(G_x)/G_x$ has an
element of order 2, and therefore we may choose $g \in N_{G_x}(G_x)$ as a lift of this element of order 2. If $y:=gx$, then by construction we have $x=gy$.

If $k=2$, then there is only one isomorphism class of local representations. 
Let $\Gamma_{G_x}=\{p \in M'' | \mathrm{Stab}_{G_x}(p) \ne \{1\}\}$ be the ramification locus of this action.  Then $\Gamma_{G_x}$ is a properly embedded topological graph.  Because the action of $G_x$ on $\partial M''=\Sigma$ is free, $\Gamma_{G_x} \cap \Sigma$ is empty, and so in particular every vertex in this graph has valence $6$.  
The quotient graph $\Gamma_{G_x}/G_x$ resides in the quotient manifold $M''/G_x$, and its vertices are in bijection with the vertices of $\Gamma_{G_x}$, hence, in bijection with the points in $\Lambda_x$.  The vertices of $\Gamma_{G_x}/G_x$ all have valence 3.  Since twice the number of edges equals three times the number of vertices, we see that each connected component of $\Gamma_{G_x}/G_x$ has an even number of vertices, and hence the same is true for each connected component of $\Gamma_{G_x}$.  Note that this implies $|\Lambda_x|=|N_{G_x}(G_x)/G_x|$ is even.  Now as in the previous paragraph, we again choose $g \in N_{G_x}(G_x)$
lifting an element of order 2 in $N_{G_x}(G_x)/G_x$ and take $y :=gx$ with $x=gy$.

In both cases $k>2$ and $k=2$, note that $g^2 \in G_x$, and therefore the conjugation action of $g$ on $G_x$ squares to an inner automorphism of $G_x$.  This is especially helpful when $k=2$, \emph{i.e.}\ when $G_x \cong D_4 \cong \mathbb{Z}_2 \times \mathbb{Z}_2$, since in this case $\mathrm{Inn}(G_x) = 0$, and we can conclude that $g$ acts on $G_x$ by an automorphism of order either 1 or 2 (\emph{i.e.}\ never 3).

We now specify a preferred "rotation subgroup generator" of $G_x$.  When $k=2$ and $g$ conjugates $G_x$ by a non-trivial automorphism (necessarily of order 2, as just discussed above), then we take $\sigma$ in $G_x$ to be the unique non-trivial element of $G_x$ fixed by conjugation with $g$.  If $g$ conjugates $G_x$ trivially and there is not a loop in $\Gamma_{G_x}/G_x$ at $x$, then we take $\sigma$ to be an arbitrarily chosen element of $G_x$; if there is a loop at $x$, then we take $\sigma$ to correspond to the unique element in $G_x$ that does not stabilize points in the preimage of the interior of the loop (here the preimage can be taken with respect to $\Gamma_{G_x} \to \Gamma_{G_x}/G_x $).  When $k>2$, our preferred $\sigma$ is given essentially for free: pick either one of the two non-trivial elements of $G_x$ with minimal (unsigned) rotation angle (in its action on $N_x$) and call it $\sigma$.

With these choices for $\sigma$, in either the $k=2$ or $k>2$ case, we may parametrize $G_x$ as $G_x = \langle \sigma, \alpha \mid \sigma^k = \alpha^2 = 1, \alpha\sigma\alpha = \sigma^{-1} \rangle \cong D_{2k}$ for some arbitrarily chosen ``reflection" $\alpha$ in $G_x$. We also know that $g$ commutes with $\sigma$ whenever $k=2$ (because of how we picked $\sigma$), but
when $k>2$ it may be that $g\sigma g^{-1}=\sigma^{-1}$ since the local representations
around $x$ and $y$ are isomorphic. If this were the case, replace $g$ by $\alpha g$
and note that $\alpha g$ commutes with $\sigma$. Therefore we have found $g \in N_{G}(G_x)$ with
$gx=y \neq x$, $gy=x$ and $g\sigma g^{-1}=\sigma$.

  This in turn implies the following essential facts:
\begin{itemize}
\item when $k>2$, then no matter how $\sigma$ conjugates $G_x$, we must have that $g^2 = \sigma^l$ for some $0 \le l \le k-1$.
\item when $k=2$, then $\sigma$ must conjugate $G_x = \{1,\sigma,\alpha,\sigma\alpha\} \cong D_4$ by an automorphism that leaves $\sigma$ invariant.  Thus, $\sigma$ either commutes with all of $G_x$, or else swaps $\alpha$ and $\sigma \alpha$.
\begin{itemize}
\item If $g$ swaps $\alpha$ and $\sigma\alpha$, notice that $g$ does not commute with $\alpha$ or $\alpha\sigma$, hence $g^2$ (which $g$ \emph{does} commute with $g^2$) can not equal $\alpha$ or $\alpha\sigma$.  In other words, when $g$ acts non-trivially on $G_x$, then we must have $g^2 =1$ or $g^2=\sigma$. 
\item If $g$ commutes with all of $G_x$, then in principle we might have $g^2$ equal any element of $D_4$.  We will see below that in fact the only possibility is $g^2 = 1 \in D_4$.
\end{itemize}
\end{itemize}

Consider the group
\begin{align}
 K := \mathrm{Stab}_{G}\{x,y\} = \mathrm{Stab}_{N_G(G_x)}\{x,y\} = \langle \sigma, \alpha, g\rangle.
 \end{align}
Notice that by construction, if $h$ is any element of $G$ such that $hx = y$, then in fact $h \in K$.  (This is because $hx = gx$ implies $g^{-1}h \in G_x = G_y$, hence $g^{-1}h \in G_x$ and so $h \in gG_x \subset K$.)  In other words, we can build a $G$-equivariant matching on the orbit $G\cdot x$ by taking $\{x,y\}$ to be one pair in the matching, and inducing up to the entire orbit; the stabilizer of any edge in this matching is then conjugate to $G_x$.  Therefore, if we can build a $K$-equivariant handle that allows us to desingularize the action of $K$ on $N_x \sqcup N_y$, then we may induce this to a well-defined $G$-equivariant handle that desingularizes the action of $G$ on the entire orbit $G\cdot N_x$.

We will now classify the possibilities for how $K$ acts on $N_x \sqcup N_y$, and build non-singular handles for each possibility.  This will involve some casework, some of which depends on the integer $k\ge 2$ such that $G_x \cong D_{2k}$, and our success depends critically on the established fact that $g$ commutes with $\sigma$.

Notice that $K$ sits in an exact sequence
\begin{align} 1 \to G_x \to K \to K/G_x \to 1,\end{align}
where $K/G_x = \langle g \mod G_x \rangle = \mathbb{Z}_2$.  Recall that equivalence classes of such extensions can be placed in (non-canonical) bijection with the following pairs of data: homomorphisms $f: \mathbb{Z}_2 \to \mathrm{Out}(G_x)$ such that a certain canonically associated class in $H^3(\mathbb{Z}_2;\mathrm{Z}(G_x))$ vanishes, together with a class $\omega \in H_f^2(\mathbb{Z}_2; Z(G_x))$, where $\mathbb{Z}_2$ acts on the coefficients $Z(G_x)$ in a manner induced by $f$.

However, not all homomorphisms $f: \mathbb{Z}_2 \to \mathrm{Out}(G_x)$ will be pertinent to our situation, because (except in the case $k=2$ and $g$ commutes with $G_x$), we already know that $g^2=\sigma^l$ for some $0 \le l \le k-1$.  Let us use some group cohomology to constrain the possibilities for $K$ when $k>2$.

If $k$ is odd, then $Z(G_x) = \{0\}$, and so $H^2(K/G_x;Z(G_x)) = \{0\}$ and there is only one thing $K$ could possibly be given that $g^2 = \sigma^l$, namely:
\begin{align} K = \langle \sigma, \alpha, g \mid \sigma^k = \alpha^2 = 1, \alpha\sigma\alpha=\sigma^{-1}, g\sigma g^{-1} = \sigma, g\alpha g^{-1} = \alpha\sigma^l, g^2 = \sigma^l \rangle\end{align}
where $0 \le l \le k-1$.

If $k>2$ is even, then $Z(G_x) = \langle\sigma^{k/2}\rangle \cong \mathbb{Z}_2$ and the action of $K/G_x$ on the coefficients $\mathbb{Z}_2$ must be trivial no matter what $l$ is; therefore $H^2(K/G_x;Z(G_x))\cong\mathbb{Z}_2$ and we should expect two non-equivalent extensions for a given $l$.  These are precisely: 
\[\begin{aligned}
K_* &=\langle \sigma, \alpha, g \mid \sigma^k = \alpha^2 = 1, \alpha\sigma\alpha=\sigma^{-1}, g\sigma g^{-1} = \sigma, g\alpha g^{-1} = \alpha\sigma^l, g^2 = \sigma^l \rangle \\
K_\dagger &= \langle \sigma, \alpha, g \mid \sigma^k = \alpha^2 = 1, \alpha\sigma\alpha=\sigma^{-1}, g\sigma g^{-1} = \sigma, g\alpha g^{-1} = \alpha\sigma^{l+(k/2)}, g^2 = \sigma^l \rangle,
\end{aligned}
\]
where $0 \le l \le k-1$.

If $k=2$, rather than use group cohomology to upper bound the possibilities for $K$, we simply list the 6 known possibilities so far:
\[
\begin{aligned}
K_1 &=\langle \sigma, \alpha, g \mid \sigma^2 = \alpha^2 = 1, \alpha\sigma\alpha=\sigma, g\sigma g^{-1} = \sigma, g\alpha g^{-1} = \alpha\sigma, g^2 = 1 \rangle \qquad \mathrm{(!)}\\
& = \langle \alpha, g \alpha \mid   (g \alpha)^4=\alpha^2=1, \alpha(g\alpha)\alpha= (g\alpha)^{-1} \rangle \cong D_8 \\
K_2 &=\langle \sigma, \alpha, g \mid \sigma^2 = \alpha^2 = 1, \alpha\sigma\alpha=\sigma, g\sigma g^{-1} = \sigma, g\alpha g^{-1} = \alpha, g^2 = \sigma \rangle \qquad \mathrm{(!)} \\
&\cong \mathbb{Z}_2 \times \mathbb{Z}_4 \\
K_3 &=\langle \sigma, \alpha, g \mid \sigma^2 = \alpha^2 = 1, \alpha\sigma\alpha=\sigma, g\sigma g^{-1} = \sigma, g\alpha g^{-1} = \alpha, g^2 = \alpha \rangle \qquad \mathrm{(!)}  \\
&\cong K_2  \\
K_4 &=\langle \sigma, \alpha, g \mid \sigma^2 = \alpha^2 = 1, \alpha\sigma\alpha=\sigma, g\sigma g^{-1} = \sigma, g\alpha g^{-1} = \alpha, g^2 = \sigma\alpha \rangle \qquad \mathrm{(!)}  \\
&\cong K_2  \\
K_5 &=\langle \sigma, \alpha, g \mid \sigma^2 = \alpha^2 = 1, \alpha\sigma\alpha=\sigma, g\sigma g^{-1} = \sigma, g\alpha g^{-1} = \alpha\sigma, g^2 = \sigma \rangle\\
& = K_* \  (\mathrm{for} \ l=1) \cong D_8  \\
K_6 &=\langle \sigma, \alpha, g \mid \sigma^2 = \alpha^2 = 1, \alpha\sigma\alpha=\sigma, g\sigma g^{-1} = \sigma, g\alpha g^{-1} = \alpha, g^2 = 1 \rangle  \\
& = K_*   \  (\mathrm{for} \ l=0) \cong \mathbb{Z}_2 \times \mathbb{Z}_2 \times \mathbb{Z}_2    
\end{aligned}
\]
Each of these groups have order 8 as needed, so we see that no further constraints are needed to specify $K$ beyond specifying which of the two possible automorphisms fixing $\sigma$ we have $g$ act on $G_x$ by, and which element of $\sigma^2 \in G_x$ is.

All of the above listed possibilities for $K$ are based on naive algebra.  An algebraic classification of the possibilities for $K$ is not immediately equivalent to a geometric classification of the different possible \emph{faithful representations} of these $K$ with $K \to \mathrm{Iso}_+(B^3\sqcup B^3)$, which are, after all, what we need to desingularize.  In particular, we will see that the $K_1$ and $K_2$ (and therefore $K_3$ and $K_4$) in the case $k=2$ above marked with (!) have \emph{no} faithful representations into $\mathrm{Iso}_+(B^3\sqcup B^3)$.

To understand how these possible abstract structures of the extension $K$ relate to the geometry of the action of $K$ on $N_x \sqcup N_y \cong B^3 \sqcup B^3$, we parametrize so that $G_x$ acts on each copy of $B^3 \subset \mathbb{R}^3$ in the same standard way:
\begin{align}
\sigma(x,y,z) &= \left(\cos\left(\frac{2\pi l}{k}\right)x-\sin\left(\frac{2\pi l}{k}\right)y, \sin\left(\frac{2\pi l}{k}\right)x + \cos\left(\frac{2\pi l}{k}\right)y, z\right) \\
\alpha(x,y,z) &= \left(x,-y,-z\right)
\end{align}
with $0 < l <k $ and $l$ coprime with $k$.

Note that this standard action of $D_{2k}$ on $B^3$ is unique up to a sign, meaning any two faithful representations $D_{2k} \to \mathrm{SO}(3)$ that have $\sigma$ acting by rotation angle $\pm \frac{2\pi l}{k}$ are related by a conjugacy in $\mathrm{SO}(3)$.  With this, we see that the equivalence class of an isometric action of $K$ on $N_x \sqcup N_y$---\emph{when it exists}---is entirely determined (in the relevant sense, namely, up to conjugation by $\mathrm{Iso}_+(N_x \sqcup N_y)$) by the representation of $G_x$ on either component, and the diffeomorphism affected when $g$ swaps $N_x$ and $N_y$.

We will now show that none of the groups marked with a (!) in the $k=2$ case above is geometrically realizable.  It suffices to show this for $K_1$ and $K_2$.  In both cases, we assume without loss of generality that $\sigma$ and $\alpha$ act on $B^3 \sqcup B^3$ in the standard way shown above.

For $K_1$: the only possibilities for $g$ are $g(x,y,z)= (-y,x,z)$, $g(x,y,z)= (y,-x,z)$, $g(x,y,z)= (-y,-x,-z)$ or $g(x,y,z)= (y,x,-z)$, since
$g$ must leave the $z$-axis fixed and swap the $x$ and $y$ axis. The first two contradict $g^2 = 1$.  The second two contradict that $g$ commutes with $\alpha$.

For $K_2$: since $g$ commutes with all three generators, it leaves each axis fixed, and the possible actions are exactly: $g(x,y,z)= (x,-y,-z)$, $g(x,y,z)= (-x,y,-z)$ or  $g(x,y,z)= (-x,-y,z)$.  None of these squares to $\sigma$.

Finally, we will show that all remaining $K$ are geometrically realizable while simultaneously achieving our most important goal: a description of the desingularizing handle we are after for each possibility.

For each of them, we may define a faithful 4-dimensional real representation of $K = \langle \sigma, \alpha, g \rangle$ as follows:
\begin{align}
\sigma(x,y,z,t)=&(\cos(\tfrac{2\pi l}{k})x  -\sin(\tfrac{2\pi l}{k})y,\sin(\tfrac{2\pi l}{k})x + \cos(\tfrac{2\pi l}{k})y,z,t)\\
\alpha(x,y,z,t)=&(x,-y,-z,t)\\
g(x,y,z,t)=&(\cos(\tfrac{\pi j}{k})x  -\sin(\tfrac{ \pi j}{k})y,\sin(\tfrac{ \pi j}{k})x + \cos(\tfrac{\pi j}{k})y,-z,-t).
\end{align}
Here $j=l$ if $k$ is odd, and $j=l$ or $l+(k/2)$ if $k>2$ is even and $K=K_*$ or $K=K_\dagger$ respectively. For the groups $K_5$ and $K_6$ we take 
$j=1$ and $j=2$ respectively. Note that we are continuing to assume (without loss of generality) that $\sigma$ is the element in $G_x$ that acts on $N_x$ as rotation by the angle $\frac{2 \pi l}{k}$.

Clearly $(x,y,z,t)$ is a fixed point of $g$ only if $z=t=0$.  In the two cases with $k=2$, observe that for $K_5$ we have
\begin{align} g(x,y,z,t)=(-y,x,-z,-t) \end{align}
and for the group
$K_6$ we have
\begin{align} g(x,y,z,t)=(-x,-y,-z,-t),\end{align}
so other than the zero point $(0,0,0,0)$, $g$ has no fixed points at all in $\mathbb{R}^4$.  Thus $(x,y,0,0) \ne (0,0,0,0)$ is a fixed point of $g$ only if $k>2$ and $j=l=0$.  Non-trivial powers of $\sigma$ never share a non-zero fixed point with $g$, as $\sigma$ acts freely on the plane $z=t=0$.  We conclude that any non-zero point in the $z=t=0$ plane has either a trivial stabilizer, a cyclic stabilizer generated by an element of $G_x$, or a stabilizer of the form $\langle \sigma^p \alpha, g\rangle \cong D_4$ for some $0 \le p \le k-1$, and moreover, this third case can only occur when $k>2$ and $j=l=0$.  This latter fact---that non-cycle stabilizers occur in this plane only when $k>2$---is essential to the remainder of our argument.

Now consider the action of $K$ on the unit sphere $S^3 \subset \mathbb{R}^4$, \emph{i.e.}\ on 
\begin{align}
 S^3 = \{(x,y,z,t) \in \mathbb{R}^4 \colon x^2 +y^2+z^2+t^2=1 \}.
 \end{align}
The isotropy group of each point $(0,0,0,-1)$ and $(0,0,0,1)$ is the dihedral group $G_x = \langle \sigma,\alpha\rangle \cong D_{2k}$ and $g$ swaps these two points.   Moreover, every other point in $S^3 \setminus \{(0,0,0,-1), (0,0,0,1)\}$ has either trivial or cyclic isotropy, with some minor exceptions: when $k>2$ and $j=l=0$, there are points in $S^3 \setminus \{(0,0,0,-1), (0,0,0,1)\}$ with isotropy isomorphic to $D_4$. Denote by $W$ a small $K$-equivariant ball around the union of $(0,0,0,-1)$ and $(0,0,0,1)$ and remove it from $S^3$. Attach the $G$-equivariant handle $G \times_K (S^3 \backslash W)$ to the boundary of the $G$ equivariant normal neighborhood of $G \cdot x$ on $M$.  For $k>2$,  the resulting 3-manifold has fewer points with dihedral isotropy $D_{2k}$, although it may create new points with $D_4$ isotropy.   Attach the handles inductively for all dihedral isotropies isomorphic to $D_{2k}$ with $k>2$ (picking maximal such isotropies at every step) and all isomorphism classes of faithful irreducible representatios, and arrive at a manifold whose isotropies are only cyclic or dihedral of order $4$.  Now repeat step 2 of the proof to arrive at a manifold with only cyclic isotropies and dihedral isotropies of order 4 with, moreover, the property that for any $x$ and $y$ with $G_x = G_y \cong D_4$, we know $x$ and $y$ are in the same $G$ orbit. Finally, return to step 3 and desingularize any remaining $D_4$ isotropies as we did in the case with $k>2$.  Since the handles we have constructed for the $D_4$ singularities have no non-cyclic isotropies on their interior, attaching them to the remaining $D_4$ singularities gives our final manifold $M'''$ with only cyclic isotropies.
\end{proof}

We are now ready to show which surfaces with free $G$-actions bound equivariantly.

\begin{theorem} \label{theorem free actions that bound}
Let $G$ be a finite group. Then the oriented and unitary
equivariant bordism of surfaces with free $G$-actions fits into the exact sequence
\begin{equation} \label{exact sequence for B0(G)}
{\Omega}_3^{G}\{\cala G,\{1\}\} \stackrel{\partial}{\to} \widetilde{\Omega}_2(BG) \to \tilde{B}_0(G) \to 0.
\end{equation}
\end{theorem}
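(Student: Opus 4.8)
The plan is to show that the image of the boundary map $\partial\colon \Omega_3^G\{\cala G,\{1\}\} \to \widetilde{\Omega}_2(BG)$ is exactly the subgroup $M_0(G) \subset H_2(BG,\IZ)$ generated by the images of all maps $\IZ\times\IZ \to G$; combined with the identifications $\widetilde{\Omega}_2^U(BG) \cong \Omega_2^{SO}(BG) \cong H_2(BG,\IZ)$ and the definition $\tilde B_0(G) = H_2(G,\IZ)/M_0(G)$ from \eqref{definition Bogomolov homology}, this yields exactness of \eqref{exact sequence for B0(G)}. So the theorem splits into two inclusions: $\Image(\partial) \subseteq M_0(G)$ and $M_0(G) \subseteq \Image(\partial)$.

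For the inclusion $M_0(G) \subseteq \Image(\partial)$, I would exhibit explicit bounding manifolds. A generator of $M_0(G)$ comes from a homomorphism $\IZ\times\IZ \to G$, i.e.\ a commuting pair $a,b \in G$, giving a map $T^2 \to BG$ whose image in $H_2$ we must hit. Geometrically, the free $G$-surface is $G\times_{\langle a,b\rangle} T^2$ (or rather the relevant covering torus), and the claim is that such a torus-quotient free action bounds equivariantly: one builds a $3$-manifold with free $G$-action bounding it, e.g.\ by filling in along the obvious mapping-torus or $S^1$-bundle structure (a free $G$-action over $T^2$ extends over a handlebody-like $G$-manifold, or one uses $T^2 = \partial(T^2\times[0,1])$ trivially combined with the surgery/handle moves already developed in Lemma \ref{Lemma blow up} and Proposition \ref{3-d bordism of dihedral} to absorb the mapping torus). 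This direction reproduces the ``torus quotients bound'' half and should follow the constructions in \cite{Segovia}.

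For the inclusion $\Image(\partial) \subseteq M_0(G)$, take $[M] \in \Omega_3^G\{\cala G,\{1\}\}$, so $M$ is a compact oriented $3$-manifold with free $G$-action on $\partial M = \Sigma$. By Lemma \ref{Lemma blow up} we may replace $M$ (without changing the boundary, and — crucially — we only need the boundary class, so the fixed-point-homomorphism ambiguity between $M$ and $M'$ is irrelevant here) by a $G$-manifold $M''$ whose ramification locus is a smooth $1$-manifold with only cyclic isotropy. Now analyze the quotient orbifold $M''/G$: its singular locus is a disjoint union of circles and arcs, each labeled by a cyclic group, and the boundary class $[\Sigma/G] \in H_2(M''/G;\IZ)$ must lie in the kernel of (or rather, factor through) the relevant portion of the homology. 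The key point is that near each singular circle the local structure is $S^1 \times$(cone on a lens-space link), and near each singular arc it is an interval's worth of such; pushing the fundamental class of $\Sigma/G$ into $M''/G$ and using the long exact sequence of the pair $(M''/G, \text{sing locus})$, together with the fact that cyclic-by-cyclic local pieces contribute only toral classes (the preimage of a singular circle is a union of tori on which the $G$-action is free, giving precisely elements of $M_0(G)$), forces $[\Sigma] \mapsto 0$ in $H_2(BG)/M_0(G)$.

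The hard part will be the last step: controlling exactly which classes in $H_2(BG;\IZ)$ arise from the singular strata of $M''/G$ and verifying they are all toral. One must handle the combinatorics of how singular arcs meet $\partial(M''/G)$ and each other, show the relevant Mayer--Vietoris / long-exact-sequence argument closes up, and confirm that the abelianization of each vertex/local group contributes nothing beyond the $\IZ\times\IZ$-generated subgroup — this is where the restriction to \emph{cyclic} isotropy (achieved via Lemma \ref{Lemma blow up}) is essential, since a non-abelian isotropy group could a priori contribute a non-toral class to $M_0(G)$'s complement. I would organize this as: (i) reduce to $M''$ with smooth ramification locus; (ii) set up the pair sequence for $(M''/G,\overline{M''}/G)$ with $\IZ$-coefficients and identify the image of $[\Sigma/G]$; (iii) show each component of $\overline{M''}$ with its tubular neighborhood contributes only toral classes after transfer to $H_2(BG)$; (iv) conclude $\Image(\partial) = M_0(G)$ and read off exactness of \eqref{exact sequence for B0(G)}.
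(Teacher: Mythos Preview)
Your overall structure matches the paper: prove $\Image(\partial)=M_0(G)$ via the two inclusions. But both halves are handled more directly in the paper than in your proposal.

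\medskip
\noindent\textbf{On $\Image(\partial)\subseteq M_0(G)$.} Your homological plan (long exact sequence of $(M''/G,\overline{M''}/G)$, Mayer--Vietoris on singular strata, etc.)\ is more machinery than needed, and it contains a small misconception: since $G$ acts \emph{freely} on $\partial M$, the ramification locus $\overline{M''}$ lies entirely in the interior, so it is a closed $1$-manifold---a disjoint union of circles. There are no ``singular arcs meeting $\partial(M''/G)$'' to worry about. With that in hand the paper's argument is a one-liner in bordism rather than homology: after invoking Lemma~\ref{Lemma blow up} so that $\overline{M}$ is a union of circles with cyclic isotropy, observe that on the complement of a $G$-tubular neighbourhood $N$ of $\overline{M}$ the action is free, hence $[M]=[N]$ in $\Omega_3^G\{\cala G,\{1\}\}$ and $\partial M$ is freely $G$-cobordant to $\partial N$. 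But $N$ is the unit disc bundle of the normal bundle of a union of circles, so $\partial N$ is a circle bundle over circles, i.e.\ a disjoint union of tori; its quotient by $G$ is again a torus, giving a toral class. No pair sequences or transfer arguments are required.

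\medskip
\noindent\textbf{On $M_0(G)\subseteq \Image(\partial)$.} Here your proposal is too vague to count as a proof. ``Fill in along the mapping torus'' or ``use $T^2=\partial(T^2\times[0,1])$'' does not work as stated: a free $G$-action on a torus cannot bound a \emph{free} $G$-manifold in general, and the handle moves of Lemma~\ref{Lemma blow up} and Proposition~\ref{3-d bordism of dihedral} modify existing fixed points rather than produce a bounding manifold from nothing. The paper supplies the missing idea explicitly. Given commuting $a,c\in G$, set $A=\langle a\rangle$, choose a faithful $\rho:A\to U(1)$, and form the $U(1)$-extension $U(1)\times_A N_GA$ of $W_A$. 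The element $c$ determines a principal $(U(1)\times_A N_GA)$-bundle $E\to S^1$; then the associated disc bundle $D(\IC\times_{U(1)}E)$ is a $3$-manifold on which $N_GA$ acts with $A$ rotating the disc fibres (non-freely on the zero section), and whose boundary sphere bundle is exactly the free $N_GA$-surface realising the toral class. Inducing up to $G$ gives $[G\times_{N_GA}D(\IC\times_{U(1)}E)]\in\Omega_3^G\{\cala G,\{1\}\}$ with $\partial$ hitting the given toral class. The essential point you are missing is that the bounding $3$-manifold must be allowed to have non-free interior, and the natural filling is the $2$-disc on which the cyclic group $A$ acts by rotation.
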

\begin{proof}
Let us first show that the image of the boundary map consists of toral classes in $H_2(BG,\IZ)$, that is,
homology classes coming from the image of maps of tori $S^1 \times S^1 \to BG$.

Let $M$ be a 3-dimensional $G$-manifold (oriented or unitary) whose boundary $\partial M$ has a free $G$-action;
in the oriented case take $M$ as shown in Lemma \ref{Lemma blow up}
Note that if the ramification locus $\overline{M}$ is not empty, then it is a smooth oriented 1-dimensional manifold; in the unitary case
this follows from the fact that fixed points of all non-trivial subgroups can only have complex codimension 0 or 1.

 If $\overline{M}$ is empty
then $M$ has a free $G$ action  and therefore the boundary surface $(\partial M)/G$ bounds. If $\overline{M}$ is not empty
we may consider the $G$-equivariant tubular neighborhood $N$ of $\overline{M}$ in $M$. The manifolds $N$ and $M$ define
the same bordism class since on $M -N$ the action of $G$ is free, and therefore $\partial M$ and $\partial N$ are cobordant. The tubular neighborhood $N$ is homeomorphic
to the unit ball bundle $B \nu$ of the normal bundle $\nu$ of $\overline{M}$ in $M$. The sphere bundle $S\nu$ defines the $S^1$-principal bundle
$S^1 \to S\nu \to \overline{M}$ and since every circle bundle over the circle is topologically a torus, then the sphere bundle $S\nu$ is homeomorphic to a disjoint union of 2-dimensional tori. Hence $\partial N$ is a disjoint union of 2-dimensional tori, and its quotient $\partial N / G $ is a 
torus (since $M/G$ is connected and $\chi(\partial N/G) = \chi(\partial N) /|G|$). 
Hence we have now proved that the image of the boundary map $\partial$ of \eqref{exact sequence for B0(G)} consists only of toral classes in $H_2(BG, \IZ)$.

Now let us show the converse, namely, that any toral class in $H_2(BG, \IZ)$ lies in the image of the boundary map of \eqref{exact sequence for B0(G)}. Take any toral class defined by a homomorphism $\varphi : \IZ \times \IZ \to G$ and denote by $A:=\varphi(\IZ\times\{0\})$ and
$C:=\varphi(\{0\} \times \IZ)$ the cyclic subgroups of $G$ that define the toral class. Denote by $a := \varphi(1,0)$ and $c:=\varphi(0,1)$
the generators of $A$ and $C$ respectively. 

Let $N_GA$ be the normalizer of $A$ in $G$ and note that $C$ is a subgroup of the normalizer. 
Denote by $\iota$ and $\bar{\iota}$ the homomorphism $\iota: \IZ \to N_GA$, $\iota(n):=c^n$ and the homomorphism to the quotient $\bar{\iota}: \IZ \to W_A$.
Consider the irreducible representation $\rho : A \to U(1)$, $\rho(a):=e^\frac{2\pi i}{|A|}$, and define the $U(1)$ extension
of $W_A$ by the exact sequence of groups
\begin{equation}
U(1) \to U(1) \times_A N_GA \to W_A
\end{equation}
where $U(1) \times_A N_GA$ is defined by the equivalence relation $(\lambda \rho(\alpha), g) \sim (\lambda, \alpha g)$
for all $\alpha \in A$, $\lambda \in U(1)$ and $g \in N_GA$.

Consider the homomorphism $\tilde{\iota}:\IZ \to U(1) \times_A N_GA$, $\tilde{\iota}(n) := [(1,\iota(c^n))]$, and note that its classifying
map 
\begin{equation}
B\tilde{\iota} : S^1 \to B(U(1) \times_A N_GA)
\end{equation}
factors through the classifying map $B\bar{\iota}: S^1 \to BW_A$.

Let $E := (B\tilde{\iota})^* E(U(1) \times_A N_GA)$ be the pullback of the universal bundle and note two things. First, $E$
is a principal $U(1) \times_A N_GA$-bundle over the circle $S^1$, and therefore it is a surface. Second, the canonical homomorphism 
\begin{equation}
N_GA \to U(1) \times_A N_GA, \ \ g \mapsto [(1,g)],
\end{equation}
induces a free action of $N_GA$ on $E$. Now it is straightforward to notice that the homology class of the surface $E/N_GA \to BN_GA \to BG$ in
$\widetilde{\Omega}_2(BG)$
agrees with the homology class defined by $B\varphi_*[S^1\times S^1]$.

We still need to show that the surface $E$ equivariantly bounds. Take the quotient $F = E/U(1)$
and note that $F$ is homeomorphic to $(B\bar{\iota})^* EW_A$; hence $F$ is the principal 
$W_A$-bundle over the circle that $\bar{\iota}$ defines (see the following commutative diagram).
\begin{align}
\xymatrix @R=0.15pc @C=0.5pc{
U(1) \ar[rd] \ar@{-}[dd] &&&&\\
&U(1)\times_AN_GA \ar@{-}[rr] \ar[dd]  \ar[rd] & & U(1)\times_AN_GA \ar@{-}[d]  \ar[rd]& \\
U(1) \ar[dr] &  & W_A \ar@{-}[rr]  \ar[dd] & \ar[d] & W_A \ar[dd]\\
&E \ar@{-}[r] \ar[rd] \ar[dd]&  \ar[r]& E(U(1)\times_A N_GA) \ar@{-}[d]  \ar[rd]& \\
 & & F \ar[rr] \ar[dd] &{} \ar@{->}[d] & EW_A \ar[dd] \\
&S^1 \ar@{-}[r]^{B\tilde{\iota}} \ar@2{-}[rd] & \ar@{->}[r]&  B(U(1)\times_A N_GA)   \ar[rd]& \\
 & & S^1 \ar[rr]^(.15){B\bar{\iota}} & & BW_A 
}
\end{align}

We have that $E$ is a principal $U(1)$-bundle over $F$ and
therefore we may take the associated complex vector bundle
\begin{equation}
\IC \to \IC \times_{U(1)} E \to F.
\end{equation}
The unit bundle $D(\IC \times_{U(1)} E)$ is a unitary manifold endowed with the action of $N_GA$, whose
boundary, the sphere bundle $S(\IC \times_{U(1)} E)$, is homeomorphic to $E$:
\begin{equation}
\partial  \left( D(\IC \times_{U(1)} E) \right)  = S(\IC \times_{U(1)} E) \cong E.
\end{equation}

Therefore we have just proved that 
\begin{equation}
[G\times_{N_GA} D(\IC \times_{U(1)} E) ]\stackrel{\partial}{\to} [E/N_GA] =  B\varphi_*[S^1\times S^1],
\end{equation}
thus showing that any toral class in $\widetilde{\Omega}_2(BG)$ equivariantly bounds.
\end{proof}

Now we can put all the pieces together to understand the torsion of the equivariant bordism group of surfaces.

\subsection{Torsion of the equivariant bordism group of surfaces}
\label{ss:torsion}

By Corollary \ref{c:nofixed} we have
\begin{equation}
\overline{\Omega}_2^{G} = \Ker(\phi_2) = \Tor_\IZ (\Omega^{G}_2).
\end{equation}
Let us now determine explicitly these torsion subgroups.

\begin{theorem} \label{theorem torsion subgroup bordism}
Let $G$ be a finite group. Then there is a canonical isomorphism
\begin{equation}
  \bigoplus_{(K)}  \tilde{B}_0(W_K) \cong  \Tor_\IZ (\Omega^{G}_2)
\end{equation}
where $(K)$ runs over all conjugacy classes of subgroups of $G$, $W_K=N_GK/K$ and $\tilde{B}_0(W_K)$ is 
the homology version of the Bogomolov multiplier of the group $W_K$.
\end{theorem}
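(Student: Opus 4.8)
\emph{Strategy and the graded pieces.} The plan is to read off $\Tor_\IZ(\Omega_2^G)=\overline\Omega_2^G$ from the Conner--Floyd filtration associated with the dense filtration of families \eqref{dense filtration}, to identify its graded pieces with Bogomolov multipliers using Lemma~\ref{l:gr} and Theorem~\ref{theorem free actions that bound}, and then to promote this to a canonical splitting via the sections furnished by Lemma~\ref{l:adjacentnofixed}; everything applies verbatim to the oriented and the unitary cases, since all the inputs do and since $\widetilde\Omega_2^U(BH)\cong\Omega_2^{SO}(BH)\cong H_2(BH;\IZ)$ for every finite group $H$. By Corollary~\ref{c:nofixed} we have $\Tor_\IZ(\Omega_2^G)=\overline\Omega_2^G$, which we filter by $F_p\overline\Omega_2^G=\Image\bigl(\overline\Omega_2^G\{\calf_p\}\to\overline\Omega_2^G\bigr)$. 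This filtration is bounded below and exhaustive ($F_l\overline\Omega_2^G=\overline\Omega_2^G$ because $\calf_l=\cala G$), and by Lemma~\ref{l:gr} together with Theorem~\ref{theorem free actions that bound} applied to $W_{K_p}$,
\[
\Gr_p\overline\Omega_2^G\;\cong\;\Coker\!\left(\Omega_3^{W_{K_p}}\{\cala W_{K_p},\{1\}\}\stackrel{\partial}{\to}\widetilde\Omega_2(BW_{K_p})\right)\;\cong\;\tilde B_0(W_{K_p}),
\]
the second isomorphism identifying the map $\widetilde\Omega_2(BW_{K_p})\cong\overline\Omega_2^G\{\calf_p,\calf_{p-1}\}\to\Gr_p\overline\Omega_2^G$ with the Bogomolov quotient. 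Since the conjugacy classes $(K_p)$ exhaust all conjugacy classes of subgroups of $G$, this already exhibits $\overline\Omega_2^G$ as an iterated extension of the groups $\tilde B_0(W_K)$, one for each $(K)$.

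\emph{Canonical splitting maps.} For a conjugacy class $(K)$, let $s_K\colon\widetilde\Omega_2(BW_K)\to\overline\Omega_2^G$ send $[\Sigma/W_K]$ to $[G\times_{N_GK}\Sigma]$; this is the section of Lemma~\ref{l:adjacentnofixed} for the adjacent pair adjoining $(K)$, composed with the canonical map $\overline\Omega_2^G\{\calf\}\to\overline\Omega_2^G$, and it depends only on $(K)$, not on the filtration \eqref{dense filtration}. I claim $s_K$ annihilates every toral class. Indeed, a toral class lies in the image of $\partial\colon\Omega_3^{W_K}\{\cala W_K,\{1\}\}\to\widetilde\Omega_2(BW_K)$ by Theorem~\ref{theorem free actions that bound} for $W_K$, say it equals $[\partial N/W_K]$ with $N$ a $3$-dimensional $W_K$-manifold with isotropy groups in $\cala W_K$ and free $W_K$-action on $\partial N$. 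Viewing $N$ as an $N_GK$-manifold on which $K$ acts trivially and inducing up, $G\times_{N_GK}N$ is a $G$-$3$-manifold with isotropy groups in $\cala G$ and boundary $G\times_{N_GK}\partial N$, whence $s_K([\partial N/W_K])=[G\times_{N_GK}\partial N]=0$ in $\Omega_2^G$, hence in the subgroup $\overline\Omega_2^G$. As $\tilde B_0(W_K)$ is the quotient of $\widetilde\Omega_2(BW_K)\cong H_2(BW_K;\IZ)$ by the toral classes, $s_K$ descends to a canonical homomorphism $\bar s_K\colon\tilde B_0(W_K)\to\overline\Omega_2^G$.

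\emph{Assembling the isomorphism.} Set $\Phi=\bigoplus_{(K)}\bar s_K\colon\bigoplus_{(K)}\tilde B_0(W_K)\to\overline\Omega_2^G$; it is canonical because each $\bar s_K$ is. Filter the source by $\bigoplus_{j\le p}\tilde B_0(W_{K_j})$. Then $\Phi$ is filtration-preserving, since $s_{K_p}$ factors through $\overline\Omega_2^G\{\calf_p\}$, so $\bar s_{K_p}$ lands in $F_p\overline\Omega_2^G$. Moreover $\bar s_{K_p}$ is a section of the surjection $F_p\overline\Omega_2^G\to\Gr_p\overline\Omega_2^G\cong\tilde B_0(W_{K_p})$: the map $\overline\Omega_2^G\{\calf_p\}\to F_p\overline\Omega_2^G\to\Gr_p\overline\Omega_2^G$ kills the image of $\overline\Omega_2^G\{\calf_{p-1}\}$ and hence factors through $\overline\Omega_2^G\{\calf_p,\calf_{p-1}\}$ as the canonical projection of the first step; precomposing with the section of Lemma~\ref{l:adjacentnofixed} (which is $s_{K_p}$ up to the identification $\overline\Omega_2^G\{\calf_p,\calf_{p-1}\}\cong\widetilde\Omega_2(BW_{K_p})$) shows that the composite $\tilde B_0(W_{K_p})\xrightarrow{\bar s_{K_p}}F_p\overline\Omega_2^G\to\Gr_p\overline\Omega_2^G$ is, after the identifications of the first step, the identity. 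An induction on $p$ using the short exact sequences $0\to F_{p-1}\overline\Omega_2^G\to F_p\overline\Omega_2^G\to\Gr_p\overline\Omega_2^G\to 0$, the split injection $\bar s_{K_p}$, and the five lemma then shows $\bigoplus_{j\le p}\tilde B_0(W_{K_j})\to F_p\overline\Omega_2^G$ is an isomorphism for every $p$; taking $p=l$ gives the theorem.

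\emph{Main obstacle.} The delicate point is the claim in the second step that $s_K$ kills toral classes, which rests on the compatibility of Theorem~\ref{theorem free actions that bound} for the group $W_K$ with induction along $N_GK\le G$: one must check that recording the isotropy of the $W_K$-null-bordism $N$ inside $N_GK$ and inducing up produces a genuine $G$-manifold, with boundary exactly $G\times_{N_GK}\partial N$ and with no isotropy group outside $\cala G$, and that the resulting class really is $s_K$ applied to the toral class. Once this is settled, the identification of graded pieces in the first step and the splitting argument in the third are formal.
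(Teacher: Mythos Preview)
Your proof is correct and follows essentially the same route as the paper's own argument: identify the graded pieces of the Conner--Floyd filtration with the Bogomolov multipliers via Lemma~\ref{l:gr} and Theorem~\ref{theorem free actions that bound}, define the canonical maps $\tilde B_0(W_K)\to\overline\Omega_2^G$ by induction along $N_GK\subset G$, and then conclude by compatibility with the graded isomorphism. You supply more detail than the paper at two points---the induction argument showing that $s_K$ kills toral classes (which the paper simply asserts as ``factors through $\tilde B_0(W_K)$''), and the explicit five-lemma/section argument promoting the graded isomorphism to an isomorphism---but these are elaborations of the same strategy rather than a different approach.
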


\begin{proof}

Denote by $\Gr_*  \Tor_\IZ (\Omega^{G}_2)$ the
associated graded groups of the $G$-equivariant, unitary or oriented,
bordism groups of surfaces that are induced by the Conner-Floyd spectral sequence of the families of subgroups of \eqref{dense filtration}. 
 Lemma 
\ref{l:gr} and Theorem \ref{theorem free actions that bound} imply that
\begin{align}
\Gr_p  \Tor_\IZ (\Omega^{G}_2) \cong \tilde{B}_0(W_{K_p}),
\end{align}
and since all consecutive pairs of families are adjacent, we obtain the graded isomorphism
\begin{align} \label{graded isomorphism Tor omega}
\Gr_*  \Tor_\IZ (\Omega^{G}_2) \cong \bigoplus_{(K)}  \tilde{B}_0(W_K).
\end{align}

Now, for a fixed conjugacy class of subgroups $(K)$, the canonical map 
\begin{align} 
\widetilde{\Omega}_2(BW_{K}) \to \Omega_2^G, \ \ \ \ \Sigma/W_K  \mapsto G \times_{N_GK} \Sigma,
\end{align}
which sends the quotient space of a surface $\Sigma$ by the free $W_{K}$-action to the surface
with $G$-action whose isotropy groups lie in $(K)$, factors through $\tilde{B}_0(W_{K})$, thus producing a canonical homomorphism
\begin{align}
\tilde{B}_0(W_{K}) &\to \Omega_2^G.
\end{align}
Bundling up all these homomorphisms we obtain a canonical map
\begin{align}
\bigoplus_{(K)}  \tilde{B}_0(W_K) \to \Tor_\IZ (\Omega^{G}_2)
\end{align}
which becomes an isomorphism since it is compatible with the graded isomorphism of \eqref{graded isomorphism Tor omega}.
\end{proof}

In particular, if $G$ is a group whose Bogomolov multipliers vanish for all groups $W_K$ with $K$
a non-trivial subgroup, then $\Tor_\IZ (\Omega^{G}_2) \cong \tilde{B}_0(G)$.
This is the case whenever $G$ is one of the smallest  $p$-groups with non-trivial Bogomolov multiplier.
In the last section we present two $p$-groups of this kind.

We are now ready to provide an explicit calculation of the unitary and oriented 
equivariant bordism group of surfaces. Assembling Theorem \ref{th:Rowlett}, Proposition \ref{p:23AGPG} and
Theorem \ref{theorem torsion subgroup bordism} we obtain the following result.
\begin{theorem} \label{theorem decomposition equivariant bordism of surfaces}
Let $G$ be a finite group. Then the unitary and oriented
equivariant bordism of surfaces canonically decompose as follows:
\begin{align}
\Omega_2^{U,G} \cong& \bigoplus_{(K)}\left(\tilde{B}_0(W_K) \oplus \Omega_2^U  \oplus  \left( \bigoplus_{ \Irr_{\mathbb{C}}^1(K)} \mathbb{Z} \right)^{W_K}  \right) ,\\
\Omega_2^{SO,G} \cong& \bigoplus_{(K)} \left( \tilde{B}_0(W_K)   \oplus  \left( \bigoplus_{ \Irr_{\mathbb{C}}^1(K)_\mathbb{C}/\mathrm{conj}} \mathbb{Z} \right)^{W_K}  \right) .
\end{align}
Here $(K)$ runs over the conjugacy classes of subgroups of $G$, $W_K$ is the Weyl group $N_GK/K$, 
$ \Irr_{\mathbb{C}}^1(K)$ is
 the set of 1-dimensional non-trivial irreducible complex 
representations of $K$ endowed with the natural $W_K$ action, and $\Irr_{\mathbb{C}}^1(K)_\mathbb{C}/\mathrm{conj}$  denotes the representations of complex type modulo complex conjugation.
\end{theorem}

\section{2-dimensional SK-groups of classifying spaces}
J\"anich in \cite{Jaenich1,Jaenich2} started the study of the characterization of  invariants with the additivity property of the Euler characteristic and the signature under cutting and pasting of manifolds.

Karras and Kreck in their Diplom thesis extended the ideas of J\"anich to cutting and pasting in the bundle situation. The book  \cite{KKNO} presented and simplified these results with the definition of the $SK$-groups  of a space (cutting and pasting groups from the German \textit{Schneiden und Kleben}).  Later  Neumann \cite{Neumann} completely calculated the 2-dimensional $SK$-groups of a space in terms of what is now known as the Bogomolov multiplier of its fundamental group.
We recall in this section the main results  of \cite{KKNO} and \cite{Neumann} that allow us to relate the $SK$-relation with the equivariant bordism relation on surfaces with free actions.

The \textit{Schneiden und Kleben}  groups $SK_*(X)$ of a space are defined as the Grothendieck group of the semigroups obtained by defining the $SK$-equivalence on the class of continuous maps from oriented $n$-dimensional manifolds  to $X$ \cite{KKNO}.

The $SK$-relation is defined as follows:  given $(M_i,f_i)$ with $f_i :  M_i \rightarrow X$, we say that $(M_1,f_1)$ and $(M_2,f_2)$ are related by cutting and pasting along $\partial N$ if $M_1 = N \cup_\phi -N'$,$
M_2 = N \cup_\psi -N'$ and there are homotopies $f_1 \mid_N \simeq f_2\mid_N$, $f_1 \mid_{N'} \simeq f_2\mid_{N'}$.

The \textit{Schneiden und Kleben} bordism groups $\overline{SK}_n(X)$ of a space are defined as the quotient of the oriented bordism groups by the equivalence relation generated by the $SK$-relation:
\begin{equation}
\overline{SK}_*(X) = \Omega^{SO}_*(X) /  \sim.
\end{equation}

The group $\overline{SK}_2(BG)$ can be interpreted as the bordism group of surfaces with free $G$-actions modulo the $SK$-relations.

The following results summarize the main properties of the $SK$-relation \cite[Lem. 1.5 \& 1.6]{KKNO}.

\begin{enumerate}
\item  Any $f: S^1 \rightarrow X$ is zero in $SK_1(X)$ .
\item  If $M$ fibers over $S^n$ with fiber $F$ then for  any $f: M \rightarrow X$,   in $SK_*(X)$ we have 
\begin{equation}
[M,f] = [S^n,*] [F,f|_F].
\end{equation}
\item If $[M_2,f_2]$ is obtained from $[M_1,f_1]$ by surgery of type $(k+1,n-k)$, then in $SK_*(X)$
\begin{equation}
[M_1,f_1] + [S^n,*] = [M_2,f_2] + [S^k \times S^{n-1},*].
\end{equation}
\end{enumerate}

Now, if 
$I_*$ denotes the subgroup of $SK_*(X)$ generated by the spheres with constant maps to $X$, which is isomorphic to the integers, we have:
\begin{theorem}\cite[Thm. 1.1]{KKNO}
For a connected space $X$, there is the following exact sequence 
\begin{equation}
0 \rightarrow I_* \rightarrow SK_*(X) \rightarrow \overline{SK}_*(X) \rightarrow 0,
\end{equation}
which is moreover split. The map $\frac{\chi-\tau}{2} : SK_n(X) \rightarrow \mathbb{Z} \cong I_n$ gives the splitting.
\end{theorem}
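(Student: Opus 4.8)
The plan is to prove exactness first and then the splitting, using only properties (1)--(3) of the $SK$-relation recalled above. Let $\pi\colon SK_*(X)\to\overline{SK}_*(X)$ be the tautological map sending an $SK$-class $[M,f]$ to its class in $\overline{SK}_*(X)=\Omega^{SO}_*(X)/\!\sim$. It is additive and well defined, since an elementary $SK$-move already forces equality in $\overline{SK}_*(X)$ and disjoint union is the group operation on both sides, and it is surjective because the same closed manifolds over $X$ generate both groups. Comparing the two definitions, $\overline{SK}_*(X)$ is exactly $SK_*(X)$ modulo the subgroup $B_*(X)$ generated by all classes $[\partial W,g|_{\partial W}]$ with $W$ a compact oriented manifold with boundary and $g\colon W\to X$; here the relation $[M,f]+[-M,f]\in B_*(X)$, coming from the cylinder $M\times I$, reconciles the group completion with orientation reversal. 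Hence $\ker\pi=B_*(X)$, and the content of the theorem is the equality $B_*(X)=I_*(X)$, together with the splitting.

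For the inclusion $I_*(X)\subseteq B_*(X)$: property (2) applied to the trivial fibration $\mathrm{id}\colon S^n\to S^n$ with point fibre, together with $SK_0(X)\cong\IZ$ for connected $X$, shows $[S^n,g]=[S^n,*]$ in $SK_n(X)$ for every $g\colon S^n\to X$, so $I_n(X)=\IZ\cdot[S^n,*]$. The constant map extends over $D^{n+1}$, whence $[S^n,*]=[\partial D^{n+1},*]\in B_n(X)$. In odd dimensions one gets more: the circle bundle $S^{2k-1}\to\IC\IP^{k-1}$ and property (1) give $[S^{2k-1},*]=[\IC\IP^{k-1},*]\cdot[S^1,*]=0$, so $I_{2k-1}(X)=0$.

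The reverse inclusion $B_*(X)\subseteq I_*(X)$ is the crux, and I expect the handle-theoretic bookkeeping here to be the main obstacle. Given $W^{n+1}$ with $\partial W\neq\emptyset$ and $g\colon W\to X$, choose a Morse function $\mu\colon W\to[0,1]$ with $\mu^{-1}(0)=\emptyset$, $\mu^{-1}(1)=\partial W$, no critical points on $\partial W$, and distinct critical values --- i.e.\ a handle decomposition of the bordism from $\emptyset$ to $\partial W$. Just above the first (necessarily index-$0$) critical values, $\mu^{-1}(t)$ is a disjoint union of small $n$-spheres, so $[\mu^{-1}(t),g|]$ lies in $I_n(X)$ by the previous paragraph. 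Inductively, crossing one critical value performs a surgery on the level set, and property (3) gives
\[
[\mu^{-1}(b),g|]=[\mu^{-1}(a),g|]+[S^n,*]-[S^k\times S^{n-k},*]
\]
for the relevant $k$. It then remains to check that each correction term lies in $I_n(X)=\IZ\cdot[S^n,*]$: by property (2) it equals the product $[S^k,*]\cdot[S^{n-k},*]$, which vanishes when $k$ or $n-k$ is odd (odd spheres are $0$, as above) and, when $k$ and $n-k$ are both even, is the image of an element of $SK_n(\mathrm{pt})$ that the Euler characteristic and signature ($SK$-invariants that are complete for a point) identify with a multiple of $[S^n]$ --- the $X=\mathrm{pt}$ case, which is settled first by the same induction applied to $W=D^{k+1}\times S^{n-k}$. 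Hence $[\mu^{-1}(t),g|]\in I_n(X)$ for every regular $t$; letting $t\to1$ gives $[\partial W,g|_{\partial W}]\in I_n(X)$, so $B_*(X)=I_*(X)$ and the sequence is exact. (For $n$ odd every term above is $0$, and one reads off directly $SK_n(X)\cong\overline{SK}_n(X)$.)

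For the splitting: if $n$ is odd then $I_n(X)=0$ and there is nothing to prove. If $n$ is even, define $\rho_n\colon SK_n(X)\to I_n(X)=\IZ\cdot[S^n,*]$ by
\[
\rho_n([M,f])=\frac{\chi(M)-\bigl(1+\tfrac n2\bigr)\sigma(M)}{2}\,[S^n,*].
\]
This is a well-defined homomorphism: $\chi$ (by inclusion--exclusion) and $\sigma$ (by Novikov additivity) are additive $SK$-invariants depending only on $M$, and the numerator is always even because $\chi(M)\equiv\sigma(M)\pmod2$ for closed oriented manifolds of even dimension (Poincar\'e duality applied to the middle cohomology). Since $\chi(S^n)=2$ and $\sigma(S^n)=0$, one has $\rho_n([S^n,*])=[S^n,*]$, so $\rho_n$ is a retraction of the inclusion $I_n(X)\hookrightarrow SK_n(X)$ and the exact sequence splits. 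The only genuinely delicate points in this plan are arranging a (self-indexing) handle decomposition, propagating orientations and the map $g$ across each handle so that property (3) applies, and reducing the products $[S^k\times S^{n-k},*]$ to the case of a point.
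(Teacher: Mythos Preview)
The paper does not prove this theorem at all: it is quoted verbatim from \cite[Thm.~1.1]{KKNO} and used as a black box, so there is no ``paper's own proof'' to compare your argument against. Your Morse-theoretic strategy --- build $\partial W$ from $\emptyset$ by handle attachments and use property~(3) to track the $SK$-class through each surgery --- is in fact the standard approach in \cite{KKNO}, and your splitting via the Euler characteristic and signature is also the classical one.

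There is, however, one genuine gap. You write that the circle bundle $S^{2k-1}\to\IC\IP^{k-1}$ together with property~(1) gives $[S^{2k-1},*]=[\IC\IP^{k-1},*]\cdot[S^1,*]=0$. But property~(2), as stated in the paper, applies only when the \emph{base} of the fibration is a sphere; for $k\ge 3$ the base $\IC\IP^{k-1}$ is not a sphere, and the multiplicative identity you invoke does not follow from the three properties you allowed yourself. This matters because you use the vanishing of odd spheres to conclude that $[S^k\times S^{n-k},*]\in I_n(X)$ whenever $k$ or $n-k$ is odd, which is the heart of the inductive step $B_*(X)\subseteq I_*(X)$.

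The fix is painless: the paper also quotes from \cite{KKNO} the computation $SK_{2m+1}(*)=0$, which is proved there independently of the exact sequence (the Euler characteristic and signature already vanish, and one checks directly that these generate all $SK$-invariants of a point). Citing that computation gives $[S^{2m+1},*]=0$ and hence $[S^k,*]\cdot[S^{n-k},*]=0$ when either factor is odd, closing the gap. Your treatment of the case where $k$ and $n-k$ are both even is fine once you likewise grant the computation of $SK_*(*)$, since then $\sigma(S^k\times S^{n-k})=0$ forces the class to be a multiple of $[S^n]$. So: replace the circle-bundle sentence with a direct appeal to $SK_{\mathrm{odd}}(*)=0$, and the argument goes through.
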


The groups $\overline{SK}_*(X)$ 
fit into short exact sequences whose middle terms are the oriented bordism
groups.

\begin{theorem}\cite[Thm 1.2]{KKNO} \label{theorem exact sequence of SK}
Let $F_n(X)$ be the submodule of  $\Omega^{SO}_n(X)$ generated by all elements which have a representative that fibers over $S^1$. Then $F_n(X)$ fits into the short exact sequence
\begin{equation}
0 \rightarrow F_*(X) \rightarrow \Omega^{SO}_*(X) \rightarrow \overline{SK}_*(X) \rightarrow  0.
\end{equation}
\end{theorem}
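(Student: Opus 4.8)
The plan is to prove the only nontrivial part of the statement, namely that the canonical surjection $\pi\colon \Omega^{SO}_*(X)\to \overline{SK}_*(X)$ has kernel exactly $F_*(X)$ (injectivity of $F_*(X)\hookrightarrow\Omega^{SO}_*(X)$ and surjectivity of $\pi$ being automatic). First I would observe that, because the $SK$-relation is compatible with disjoint union, $\overline{SK}_*(X)$ is literally the quotient of the group $\Omega^{SO}_*(X)$ by the subgroup $R$ generated by all differences $[M_1,f_1]-[M_2,f_2]$ with $(M_1,f_1)$ and $(M_2,f_2)$ related by a single cut-and-paste. Hence $\ker\pi=R$, and it suffices to prove the two inclusions $R\subseteq F_*(X)$ and $F_*(X)\subseteq R$.

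For $R\subseteq F_*(X)$: suppose $M_i=N\cup_{\phi_i}(-N')$ for $i=1,2$, with $\phi_1=\phi$, $\phi_2=\psi\colon\partial N\to\partial N'$, together with homotopies $f_1|_N\simeq f_2|_N$ and $f_1|_{N'}\simeq f_2|_{N'}$. I would build the oriented bordism $W=(N\times[0,1])\cup(N'\times[0,1])$ in which the two cylinders are glued only along their end caps $\partial N\times\{0\}$ and $\partial N\times\{1\}$, using $\phi$ over $t=0$ and $\psi$ over $t=1$; after rounding the codimension-$2$ corners where the lateral faces meet the caps, $\partial W$ is the disjoint union of the cap $M_1$, the cap $-M_2$, and a lateral component obtained by gluing the two faces $\partial N\times[0,1]$ and $\partial N'\times[0,1]$ along their boundaries by $\phi$ and $\psi$ — that is, a mapping torus $T$ of $\phi\psi^{-1}$, which visibly fibers over $S^1$. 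The two homotopies combine to a map $\widetilde f\colon W\to X$ that restricts to $f_1$, $f_2$ on the caps, so in $\Omega^{SO}_*(X)$ one gets $[M_1,f_1]-[M_2,f_2]=\pm[T,\widetilde f|_T]\in F_*(X)$.

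For $F_*(X)\subseteq R$: let $(M,f)$ have a representative in which $M\to S^1$ is a fiber bundle with fiber $F$ and monodromy $g$, so that $M=T_g$. Cutting the base circle into two arcs exhibits $T_g$ as $N\cup_\psi(-N')$ with $N=F\times[0,\tfrac12]$, $N'=F\times[\tfrac12,1]$ and $\psi$ the identity over one arc and $g$ over the other; keeping $N$, $N'$ but replacing $\psi$ by the identity everywhere yields the trivial bundle $F\times S^1$ from the same two pieces. Since $N$ and $N'$ deformation retract onto fibers, the restrictions of $f$ to $N$ and to $N'$ are each homotopic to $p\circ\mathrm{pr}_F$ for the single map $p:=f|_{F\times\{0\}}\colon F\to X$, which checks the homotopy conditions of the $SK$-relation; hence $(M,f)$ is $SK$-equivalent to $(F\times S^1,\,p\circ\mathrm{pr}_F)$. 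Finally $F\times S^1=\partial(F\times D^2)$ and $p\circ\mathrm{pr}_F$ extends over $F\times D^2$ by $(x,z)\mapsto p(x)$, so $[F\times S^1,p\circ\mathrm{pr}_F]=0$ already in $\Omega^{SO}_*(X)$, whence $[M,f]=0$ in $\overline{SK}_*(X)$; thus every generator of $F_*(X)$ lies in $R=\ker\pi$. Together with the previous step this gives $R=F_*(X)=\ker\pi$, and the exact sequence follows.

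The step I expect to be the real obstacle is the first inclusion, and within it the assertion that a cut-and-paste changes the bordism class by a mapping torus. The work is entirely in the manifold-with-corners bookkeeping: checking that $W$ is a smooth oriented manifold with boundary after rounding corners, identifying the lateral boundary component precisely as the mapping torus of $\phi\psi^{-1}$ with the correct induced orientation (so that one indeed gets $[M_1]-[M_2]=\pm[T]$), and verifying that the two homotopies $f_i|_N\simeq f_i|_{N'}$ — which a priori only agree along $\partial N$ inside each $M_i$ — glue to a genuine continuous map on all of $W$. These are classical points (going back to Karras--Kreck and carried out in \cite{KKNO}), but they are where all the geometry sits; the second inclusion is essentially formal once one notices $F\times S^1=\partial(F\times D^2)$.
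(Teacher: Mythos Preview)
Your proposal is correct and follows essentially the same approach as the paper's sketch: the paper states that the ``key lemma'' is precisely that a single cut-and-paste changes the bordism class by the mapping torus of $\phi^{-1}\circ\psi$ (your inclusion $R\subseteq F_*(X)$), and that anything fibering over $S^1$ is $SK$-trivial (your inclusion $F_*(X)\subseteq R$). You supply the explicit bordism $W$ and the hands-on cut of the base circle where the paper only cites \cite{KKNO}, but the strategy is identical.
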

This  theorem  follows from the observations that any manifold that fibers over $S^1$ gives a class that is zero in $\overline{SK}_*(X)$,
 and that the kernel of the homomorphism  $\Omega^{SO}_*(X) \rightarrow \overline{SK}_*(X)$ consists of mapping tori. 
The key lemma for the opposite inclusion asserts that if $(M_1,f_1)$ is obtained from $(M_2,f_2)$ by cutting and pasting along $N$, 
 then in $\Omega^{SO}_*(X)$ the class of $(N \cup_\phi -N',f_1) - (N \cup_\psi -N',f_2)$ is equal to the mapping torus of the  diffeomorphism of $\partial N$, $\phi^{-1} \circ \psi$.
 Any mapping torus fibers over $S^1$ and any fibration over $S^1$ is a mapping torus.

In dimension $0$ and $1$ the groups $\overline{SK}_n(X)$ are trivial.  In dimension $2$ the oriented manifolds that fiber over the circle are tori. Therefore by Theorem \ref{theorem exact sequence of SK} we obtain the following result:
\begin{theorem}\cite[Thm. 2]{Neumann} Let $G$ be a discrete group. Then the 2-dimensional $\overline{SK}$-group of $BG$ is
isomorphic to the Bogomolov multiplier of $G$, i.e.,
\begin{equation}
\overline{SK}_2(BG) \cong \tilde{B}_0(G).
\end{equation}
\end{theorem}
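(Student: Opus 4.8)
The plan is to deduce this statement directly from Theorem \ref{theorem exact sequence of SK} (the Karras--Kreck--Neumann--Ossa short exact sequence) specialized to $X = BG$ in dimension $2$, combined with the classification of closed oriented surfaces that fiber over $S^1$. First I would recall that $\overline{SK}_2(BG)$ sits in the short exact sequence
\begin{equation}
0 \to F_2(BG) \to \Omega^{SO}_2(BG) \to \overline{SK}_2(BG) \to 0,
\end{equation}
so the task reduces to identifying the image $F_2(BG)$ of the surfaces fibering over $S^1$ inside $\Omega^{SO}_2(BG) \cong H_2(BG,\IZ) \cong H_2(G,\IZ)$, and then observing that the quotient is exactly $\tilde B_0(G) = H_2(G,\IZ)/M_0(G)$.

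The key steps, in order, are: (1) A closed oriented $2$-manifold that fibers over $S^1$ has an $S^1$ fiber (the only closed $1$-manifold), hence is a circle bundle over $S^1$, hence is a $2$-torus (every such bundle is a mapping torus of an orientation-preserving self-diffeomorphism of $S^1$, which is isotopic to the identity, so the total space is $T^2$). (2) Given a map $f : T^2 \to BG$, the induced homomorphism on fundamental groups is a homomorphism $\IZ\times\IZ \to G$, and the class $f_*[T^2] \in H_2(G,\IZ)$ depends only on this homomorphism; conversely every homomorphism $\varphi : \IZ\times\IZ \to G$ arises this way. Therefore $F_2(BG)$ is precisely the subgroup of $H_2(G,\IZ)$ generated by the images of $H_2(\IZ\times\IZ,\IZ) \to H_2(G,\IZ)$ over all $\varphi$, which is by definition $M_0(G)$ as in \eqref{definition Bogomolov homology}. (3) Substituting $F_2(BG) = M_0(G)$ into the short exact sequence yields $\overline{SK}_2(BG) \cong H_2(G,\IZ)/M_0(G) = \tilde B_0(G)$, using the canonical isomorphism $\Omega^{SO}_2(BG) \cong H_2(BG,\IZ)$ already recorded in the excerpt.

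I would also point out — since the excerpt frames this alongside the equivariant bordism story — that the vanishing of the $SK$-class of a surface-with-free-$G$-action exactly matches the ``toral class'' condition appearing in Theorem \ref{theorem free actions that bound}: a surface with free $G$-action is $SK$-trivial over $BG$ iff its quotient lies in $F_2(BG) = M_0(G)$, which is the same subgroup modded out to produce $\tilde B_0(G)$ there. This makes the identification $\overline{SK}_2(BG) \cong \tilde B_0(G)$ compatible with the identification $\overline{\Omega}_2^G / (\text{bounding surfaces}) \cong \tilde B_0(G)$.

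The main obstacle — really the only nontrivial point — is Step (1) together with the converse direction of Step (2): knowing that $F_2(BG)$ is not just contained in but equal to $M_0(G)$. Containment $F_2(BG) \subseteq M_0(G)$ is immediate from the torus identification; the reverse containment requires that every homomorphism $\varphi : \IZ\times\IZ \to G$ be realized by a genuine map of a torus fibering over $S^1$ with the prescribed homology class, which is straightforward since $B\varphi : T^2 = B(\IZ\times\IZ) \to BG$ is exactly such a map (the torus fibers over $S^1$ via projection to the first factor). So in fact there is no serious obstacle; the content is entirely in correctly invoking Theorem \ref{theorem exact sequence of SK} and the elementary classification of surface bundles over the circle.
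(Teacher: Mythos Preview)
Your proposal is correct and follows essentially the same approach as the paper: invoke the short exact sequence of Theorem \ref{theorem exact sequence of SK}, observe that closed oriented 2-manifolds fibering over $S^1$ are tori so that $F_2(BG)=M_0(G)$, and conclude using $\Omega_2^{SO}(BG)\cong H_2(G,\IZ)$. The paper states this in one line (``the oriented manifolds that fiber over the circle are tori''), whereas you spell out both containments and the connection to Theorem \ref{theorem free actions that bound}, but the argument is the same.
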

Reinterpreting the $SK$-groups of $BG$ in view of our previous results, we know by Theorem
\ref{theorem free actions that bound} that an element of $\overline{SK}_2(BG)$ is zero whenever the
associated $G$-cover of the surface is the boundary of a three dimensional manifold with a $G$-action.
By Theorem \ref{theorem exact sequence of SK}  we have that $SK_2(BG)\cong \mathbb{Z} \oplus \tilde{B}_0(G)$, and therefore a surface $\Sigma \to BG$   is zero in the group $SK_2(BG)$ whenever
  the Euler characteristic of $\Sigma$ is 0 and the $G$-cover $\widetilde{\Sigma}$ of $\Sigma$
 is the boundary of a three dimensional manifold with a $G$-action.
  
 It would be interesting to explore the relation of this work with the higher dimensional $SK$-groups of classifying spaces.

\section{Small groups with non-trivial Bogomolov multiplier}
\label{s:small}

We conclude this work by presenting some explicit examples of groups with non-trivial Bogomolov multiplier which induce non-trivial torsion subgroups in the equivariant bordism groups of surfaces.  Some of the calculations were done with the help of the {\it Homological Algebra Programming} package for GAP \cite{GAP4}.

\subsection{2-group of size 64}
The smallest groups with non-trivial Bogomolov multiplier are 2-groups of order 64.  There are 9 of them and all are in the same isoclinism class.  By \cite[Thm 1.2]{Moravec-isoclinism} they all have isomorphic Bogomolov multipliers and in this case it is the group $\IZ/2$.  Among the 9 isoclinic groups we chose to study the group 
\begin{equation}
C_8 \rtimes Q_8 
\end{equation}
which is the semidirect product of the group of quaternions $Q_8$ with the cyclic group $C_8$ of order $8$;
this group is denoted
\begin{center}
{\fontfamily{qcr}\selectfont SmallGroup(64,182)}
\end{center}
in the GAP small groups library.  Consider the presentations of the groups
 $Q_8 = \langle a , b \colon a^2=b^2, aba^{-1}=b^{-1} \rangle$ and $C_8 = \langle c \colon c^8=1 \rangle$ and the action
of $Q_8$ on $C_8$ given by the equations
\begin{equation}
a \cdot c = c^3,  \ \ \  b \cdot c = c^5,  \ \ \  (ab) \cdot c = c^7. 
\end{equation}
 Since $H^2(C_8, \IC^*)=0= H^2(Q_8, \IC^*)$, we know by the Lyndon-Hochschild spectral sequence that
\begin{equation}
H^2(C_8 \rtimes Q_8, \IC^*) \cong H^1(Q_8, H^1(C_8, \IC^*)) .
\end{equation}
Denote $\widehat{C}_8 := \Hom(C_8, \IC^*)= H^1(C_8, \IC^*)$ and let $\widehat{C}_8 = \langle \rho \colon \rho^8=1 \rangle$
with $\rho(c) = e^{\frac{2\pi i}{8}}$. Take the first two terms of the complex $C^*(Q_8,\widehat{C}_8)$ 
\begin{equation}
\widehat{C}_8 \stackrel{\delta}{\to} \Map(Q_8,\widehat{C}_8)
\end{equation}
and note that 
\begin{align}
\delta(\rho^k)(a^\pm) = \rho^{-2k}, \ &\ \delta(\rho^k)(b^\pm) = \rho^{4k},\\
 \delta(\rho^k)((ab)^\pm)= \rho^{2k}, \ &  \ \delta(\rho^k)(a^2)= \rho^{0}. \nonumber
\end{align}
On the other hand, take the 1-cocycle $F: Q_8 \to \widehat{C}_8 $ defined by the equations
\begin{align}
F(a^\pm) = \rho^2, \  & \ F(b^\pm) = \rho^0,\\ F((ab)^\pm) = \rho^2, \ & \ F(a^2) = \rho^0,
\end{align}
and note that $F$ does not bound but $F^2= \delta(\rho^2)$. We have therefore that
\begin{equation}
H^1(Q_8, \widehat{C}_8) \cong \langle [F] \colon [F^2]=0 \rangle \cong \IZ/2.
\end{equation}
Now, any abelian subgroup of $C_8 \rtimes Q_8 $ splits as a semi-direct product of abelian groups $C \rtimes A$ with $C \subset C_8$ and $A \subset Q_8$. Since $A$ can only be $\IZ/4$ or $\IZ/2$, it is now straightforward to check that $[F]|_{C \rtimes A}=0$.
Hence $[F]$ is the generator of the Bogomolov multiplier of $C_8 \rtimes Q_8 $ and we have that

\begin{equation}
\overline{\Omega}_2^{U, C_8 \rtimes Q_8 } \cong \overline{\Omega}_2^{SO, C_8 \rtimes Q_8 } \cong  \IZ/2.
\end{equation}

Finally, with the explicit description of $F$ we can define a surface $\Sigma_2$ of genus $2$ which defines the generator of $\widetilde{\Omega}_2^{U}( B(C_8 \rtimes Q_8))$. Consider the presentation of the fundamental group of the surface
\begin{equation}  \label{fundamental group surface genus 2}
\pi_1(\Sigma_2)= \langle x,y,z,w \colon [x,y][z,w]=1 \rangle
\end{equation}
and define the following assignment
\begin{align}
\Phi: \pi_1(\Sigma_2) & \to C_8 \rtimes Q_8\\
x \mapsto a, \ \ y \mapsto c, \ & \  z \mapsto ab, \ \ w \mapsto c, \nonumber
\end{align}
which induces a surjective homomorphism since 
\begin{equation}
\Phi([x,y][z,w])=aca^{-1}c^{-1}(ab)c(ab)^{-1}c^{-1}= c^3c^{-1}c^7c^{-1}=c^{0}.
\end{equation}

The homomorphism $\Phi$ induces a map $B\Phi : \Sigma_2 \to B(C_8 \rtimes Q_8)$, and from the construction
above of $F$, we deduce that $B\Phi_*[\Sigma_2]$ generates  the group $H_2( B(C_8 \rtimes Q_8), \IZ)$. 

Hence
the surface 
\begin{equation}
\widetilde{\Sigma}:= (B\Phi)^* E(C_8 \rtimes Q_8)
\end{equation}
 is a unitary surface with a free action of $C_8 \rtimes Q_8$
which does not equivariantly bound. 

By Theorem \ref{theorem torsion subgroup bordism} the class of
$\widetilde{\Sigma}$ is the generator of the torsion subgroup of ${\Omega}_2^{SO, C_8 \rtimes Q_8 }$:
\begin{equation}
\Tor_\IZ {\Omega}_2^{SO, C_8 \rtimes Q_8 } = \langle [\widetilde{\Sigma}]  \rangle \cong \IZ/2.
\end{equation}

To make sure that the first Chern number vanishes,
we take the bordism class
\begin{equation}
[\widetilde{\Sigma}] - [(C_8 \rtimes Q_8) \times \Sigma_2] \in \overline{\Omega}_2^{U, C_8 \rtimes Q_8 } \cong \IZ/2
\end{equation}
and by Theorem \ref{theorem torsion subgroup bordism} we conclude that this class is indeed the generator of the torsion subgroup of 
${\Omega}_2^{U, C_8 \rtimes Q_8 }$:
\begin{equation}
\Tor_\IZ {\Omega}_2^{U, C_8 \rtimes Q_8 } = \langle [\widetilde{\Sigma}] - [(C_8 \rtimes Q_8) \times \Sigma_2] \rangle \cong \IZ/2.
\end{equation}

\subsection{3-group of size 243}

The smallest 3-groups with non-trivial Bogomolov multiplier are of order 243, and the three of them are isoclinic
with Bogomolov multiplier the group $\IZ/3$.
We chose to study the group
\begin{equation}
G:=(C_9 \rtimes C_9) \rtimes C_3
\end{equation}
which is defined by the presentation
\begin{equation}
G=\langle a , b,c \colon a^3=c^3, a^9=b^9=1, [a,b]=c^8b^6, [b,c]=a^3, [a,c]=b^3c^6 \rangle.
\end{equation}
The left $C_9$ is generated by $c$, the middle $C_9$ by $b$ and the right $C_3$ by $ab$, and their corresponding actions are
\begin{equation}
bcb^{-1}=c^4,  \ \ (ab)b(ab)^{-1}=c^8b^7, \ \ (ab)c(ab)^{-1}=cb^3.
\end{equation}
This group corresponds to the small group 
\begin{center}{\fontfamily{qcr}\selectfont
SmallGroup(243,30)}\end{center} in the small groups library of GAP \cite{GAP4}.

The second page of the Lyndon-Hochschild spectral sequence has for terms
\begin{equation}
H^2(C_9 \rtimes C_9, \IC^*)^{C_3} = 0, \ \ H^1(C_3, H^1(C_9 \rtimes C_9, \IC^*))= \IZ/3, \ \ H^2(C_3,\IC^*)=0
\end{equation}
where the middle term encodes the information of the Bogomolov multiplier.

Consider the surface $\Sigma_2$ of genus 2 as in \eqref{fundamental group surface genus 2} and define the 
assignment
\begin{align}
\Phi: \pi_1(\Sigma_2) & \to (C_9 \rtimes C_9) \rtimes C_3\\
x \mapsto a, \ & \ y \mapsto b^6, \  \  z \mapsto c, \ \ w \mapsto b, \nonumber
\end{align}
which induces a surjective homomorphism since $[a,b^6]=a^3$, $[c,b]=a^6$ and
\begin{equation}
\Phi([x,y][z,w])=[a,b^6][c,b]= 1.
\end{equation}
The map $B\Phi: \Sigma_2 \to B((C_9 \rtimes C_9) \rtimes C_3)$ generates the Bogomolov multiplier and therefore
the surface
$\widetilde{\Sigma} := (B\Phi)^*E((C_9 \rtimes C_9) \rtimes C_3)$ generates the torsion subgroup of 
the equivariant oriented bordism group of surfaces
\begin{equation}
\Tor_\IZ {\Omega}_2^{SO, (C_9 \rtimes C_9) \rtimes C_3} = \langle [\widetilde{\Sigma}]  \rangle \cong \IZ/3.
\end{equation}
In the unitary case we have
\begin{equation}
\Tor_\IZ {\Omega}_2^{U, (C_9 \rtimes C_9) \rtimes C_3} = \langle [\widetilde{\Sigma}] - [(C_9 \rtimes C_9) \rtimes C_3 \times \Sigma_2] \rangle \cong \IZ/3.
\end{equation}

The surface $\widetilde{\Sigma}$ is a surface of genus 486 with a free action of $(C_9 \rtimes C_9) \rtimes C_3$
which does not equivariantly bound.

\bibliographystyle{alpha} 
\bibliography{Bordism-bibliography}

\end{document}